\documentclass{article}
\usepackage{amssymb}
\usepackage{amsfonts}
\usepackage{amsmath}

\newtheorem{theorem}{Theorem}

\newtheorem{corollary}{Corollary}

\newtheorem{lemma}{Lemma}

\newtheorem{remark}{Remark}

\begin{document}

\title{Population size of critical Galton-Watson processes under small
deviations and infinite variance\thanks{%
This work was supported by the Russian Science Foundation under grant
no.24-11-00037 https://rscf.ru/en/project/24-11-00037/} }
\author{Vladimir Vatutin\thanks{%
Steklov Mathematical Institute of Russian Academy of Sciences, 8 Gubkina
St., Moscow 119991 Russia Email: vatutin@mi-ras.ru},\thinspace\ Elena
Dyakonova\thanks{%
Steklov Mathematical Institute of Russian Academy of Sciences, 8 Gubkina
St., Moscow 119991 Russia Email: elena@mi-ras.ru}}

\date{\empty}

\maketitle

\begin{abstract}
We study the evolution of the population size distribution of a critical
Galton-Watson process with infinite variance of the offspring size of
particles assuming that the population size is unusually small at the
distant moment $n$ of observation.

\textbf{Key words}:Galton-Watson branching process, infinite variance, evolution of population

\textbf{AMS subject classification}: Primary 60J80, Secondary 60E07

\end{abstract}

\section{Introduction}

We consider a Galton-Watson branching process $\mathcal{Z}=\left\{
Z(n),n\geq 0\right\} $ starting at moment 0. Let $f(s):=\mathbf{E}s^{\xi }$ be the probability generating
function of the offspring number $\xi $ of one particle. We assume that the
reproduction law of particles satisfies the condition
\begin{equation}
\mathbf{E}\xi =1,\text{ }f(s)=s+\left( 1-s\right) ^{1+\alpha }L(1-s),\quad
0\leq s\leq 1,  \label{MainAssump}
\end{equation}%
where $\alpha \in (0,1]$ and $L(z)$ is a slowly varying function as $%
z\downarrow 0$. Denote $f_{0}(s):=s$ and introduce iterations of $f(s)$ by
the equalities
\begin{equation*}
f_{n}(s)=f(f_{n-1}(s)),\ n=1,2,....
\end{equation*}%
It is known \cite{Sl1968} that, given (\ref{MainAssump})%
\begin{equation}
(1-f_{n}(0))^{\alpha }L\left( 1-f_{n}(0)\right) \sim \frac{1}{\alpha n}
\label{SimpeForm}
\end{equation}%
as $n\rightarrow \infty $ and, therefore (see, for instance, \cite[Section
1.5, point $5^{0}$]{Sen76} or \cite{Sen67})
\begin{equation}
Q(n):=\mathbf{P}\left( Z(n)>0|Z(0)=1\right) =1-f_{n}(0)\sim \frac{L^{\ast
}(n)}{n^{1/\alpha }}\text{ \ \ as \ \ }n\rightarrow \infty ,
\label{SurvivalProbab}
\end{equation}%
where $L^{\ast }(n)$ is a function slowly varying at infinity. Moreover, for
any $\lambda >0$%
\begin{eqnarray}
\lim_{n\rightarrow \infty }\mathbf{E}\left[ e^{-\lambda
(1-f_{n}(0))Z(n)}|Z(n)>0,Z(0)=1\right] &=&\int_{0}^{\infty }e^{-\lambda
x}dM\left( x\right)  \notag \\
&=&1-\frac{1}{\left( 1+\lambda ^{-\alpha }\right) ^{1/\alpha }},
\label{Yaglom000}
\end{eqnarray}%
where $M\left( x\right) $ is a proper nondegenerate distribution such that $%
M(x)>0$ for any $x>0$.

It follows from (\ref{Yaglom000}) that, for any $x>0$
\begin{equation*}
\mathbf{P}\left( 0<(1-f_{n}(0))Z(n)\leq x|Z(0)=1\right) \approx M(x)\mathbf{P%
}\left( Z(n)>0|Z(0)=1\right)
\end{equation*}%
as $n\rightarrow \infty $. Thus, the typical size of the population survived
up to a distant moment $n$ is of order $(1-f_{n}(0))^{-1}$.

In the sequel we assume (if otherwise is not stated) that $Z(0)=1$ and study
the asymptotic behavior of the population size of the critical Galton-Watson
branching process given the event
\begin{equation*}
\mathcal{H}(l,r):=\big\{0<(1-f_{r}(0))Z(l)\leq 1\big\}  \label{Def_H}
\end{equation*}%
for various assumptions concerning the ratio between the parameters $l$ and $%
r$. We mainly deal with the case%
\begin{equation}
\mathcal{H}(n,\varphi (n)):=\big\{0<(1-f_{\varphi (n)}(0))Z(n)\leq 1\big\}
\label{Def_H1}
\end{equation}%
where $\varphi (n),\ n=1,2,...,$ is a deterministic function such that
\begin{equation}
\varphi (n)\rightarrow \infty \text{ and }\frac{\varphi (n)}{n}\rightarrow 0
\label{Def_phi}
\end{equation}%
as $n\rightarrow \infty $.

Our main result looks as follows.

\begin{theorem}
\label{T_Population_size} Let conditions (\ref{MainAssump}) be valid for $%
\alpha \in (0,1)$ and assumption (\ref{Def_phi}) hold true. Then, for any $%
\lambda >0$

1) if $m\rightarrow \infty $ and $m=o(n)$ then%
\begin{equation*}
\lim_{n\rightarrow \infty }\mathbf{E}\left[ e^{-\lambda (1-f_{m}(0))Z(m)}|%
\mathcal{H}(n,\varphi (n))\right] =\frac{1}{\left( 1+\lambda ^{\alpha
}\right) ^{1/\alpha +1}};
\end{equation*}

2) if $m\sim \theta n$ for some $\theta \in (0,1)$ then%
\begin{equation*}
\lim_{n\rightarrow \infty }\mathbf{E}\left[ e^{-\lambda (1-f_{m}(0))Z(m)}|%
\mathcal{H}(n,\varphi (n))\right] =\frac{1}{\left( 1-\theta +\left( \lambda
\left( 1-\theta \right) ^{1/\alpha }+\theta ^{1/\alpha }\right) ^{\alpha
}\right) ^{1/\alpha +1}};
\end{equation*}

3) if $m=n-\psi (n),$ where $n\gg \psi (n)\gg \varphi (n),$ then%
\begin{equation*}
\lim_{n\rightarrow \infty }\mathbf{E}\left[ e^{-\lambda (1-f_{\psi
(n)}(0))Z(m)}|\mathcal{H}(n,\varphi (n))\right] =\frac{1}{\left( 1+\lambda
\right) ^{\alpha +1}};
\end{equation*}

4) if $m=n-y\varphi (n),$ where $y\in (0,\infty ),$ then%
\begin{equation*}
\lim_{n\rightarrow \infty }\mathbf{E}\left[ e^{-\lambda (1-f_{n-m}(0))Z(m)}|%
\mathcal{H}(n,\varphi (n))\right] =\sum_{j=1}^{\infty }\frac{\alpha \Gamma
\left( j+\alpha \right) }{j!\left( 1+\lambda \right) ^{\alpha +j}}yM^{\ast
j}(y^{-1/\alpha }),
\end{equation*}%
where $\Gamma (\cdot )$ is the standard Gamma-function and $M^{\ast j}(x)$
is the $j$-th convolution of $M(x)$ with itself;

5) if $m=n-\chi (n)$ where $n\gg \varphi (n)\gg \chi (n)\geq 0$ then%
\begin{equation*}
\lim_{m\rightarrow \infty }\mathbf{E}\left[ e^{-\lambda \left( 1-f_{\varphi
(n)}(0)\right) Z(m)}|\mathcal{H}(n,\varphi (n))\right] =\alpha
\int_{0}^{1}x^{\alpha -1}e^{-\lambda x}dx.
\end{equation*}

In particular, if $T\rightarrow \infty $ and $T=o\left(
(1-f_{n}(0))^{-1}\right) $ then
\begin{equation*}
\lim_{m\rightarrow \infty }\mathbf{P}\left( Z(n)<xT|0<Z(n)\leq T\right)
=x^{\alpha },\quad 0\leq x\leq 1.
\end{equation*}
\end{theorem}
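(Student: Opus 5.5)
The proof rests on the Markov property at time $m$, together with a sharp local description of the small values of $Z(n)$. Write $Q(k)=1-f_k(0)$, $r=\varphi(n)$ and $u=Q(r)$. The first and main preliminary step is an asymptotic for $\mathbf{P}(0<Z(k)\le x/u)$ when $k\gg r$. From (\ref{MainAssump}) and (\ref{Yaglom000}), the limit law $M$ has density near zero behaving like $c\,x^{\alpha-1}$, since the Laplace transform satisfies $1-(1+\lambda^{-\alpha})^{-1/\alpha}\sim \alpha^{-1}\lambda^{-\alpha}$ as $\lambda\to\infty$. I would prove the local version
\[
\mathbf{P}\bigl(0<Z(k)\le x/u\bigr)\sim \frac{Q(k)^{1+\alpha}L(Q(k))}{u^{\alpha}\,\cdot}\,x^{\alpha}\cdot(\text{const})\sim C\,\frac{Q(k)}{k}\,\frac{x^\alpha}{u^{\alpha}L(u)^{-1}}
\]
uniformly in $x$ on compacts, for $k/r\to\infty$. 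This is most cleanly done through generating functions. We have $\mathbf{E}[s^{Z(k)};Z(k)>0]=f_k(s)-f_k(0)$, and with $s=e^{-\lambda u}$ one gets $f_k(s)-f_k(0)\approx f_k'(\cdot)\,$-type linearization. More concretely, using $1-f_k(s)=Q(k+\tau(s))$ where $\tau$ is the ``time shift'' with $Q(\tau(s))\approx 1-s$, this becomes $Q(k)-Q(k+\tau)$. Since $Q$ is regularly varying of index $-1/\alpha$ with $Q(k)-Q(k+1)\sim Q(k)/(\alpha k)$ (from (\ref{SimpeForm}) and $Q(k)-Q(k+1)=Q(k)^{1+\alpha}L(Q(k))$), the difference is $\approx \tau(e^{-\lambda u})\,Q(k)/(\alpha k)$ when $\tau\ll k$. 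Here $\tau(e^{-\lambda u})\sim \lambda^{-\alpha} r$. A Tauberian argument (the $\lambda^{-\alpha}$ dependence) then yields the $x^\alpha$ law. This already proves the final claim ($x^\alpha$ given $0<Z(n)\le T$) and identifies $\mathbf{P}(\mathcal{H}(n,r))\sim c\, r\,Q(n)/n$.

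Each case is then a computation of
\[
\mathbf{E}\bigl[e^{-\lambda aZ(m)};\mathcal{H}\bigr]=\mathbf{E}\Bigl[e^{-\lambda aZ(m)}\bigl(f_{n-m}(s)^{Z(m)}-f_{n-m}(0)^{Z(m)}\bigr)\Bigr]
\]
in its Tauberian/derivative-in-$s$ form. Here $a$ is the normalization in the statement, and $s$ ranges over $e^{-\mu u}$, with the indicator recovered by inversion in $\mu$ as above. Writing $g(s)=f_{n-m}(s)$, the right side is $f_m(e^{-\lambda a}g(s))-f_m(e^{-\lambda a}g(0))$. Expanding via the time-shift representation $1-f_m(t)=Q(m+\tau(t))$ gives a difference of $Q$ at two nearby arguments. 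The increments are of order $r$ and hence negligible relative to $m$ when $m\gg r$. So the ratio to $\mathbf{P}(\mathcal{H})$ reduces to a ratio of derivatives $\partial_s$, which produces the power $1/\alpha+1$: it arises from $Q'(k)\propto k^{-1/\alpha-1}$, with the function $1-f_m(t)=(Q(m)^{-\alpha}+\ldots)^{-1/\alpha}$ differentiated once.

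Cases 1 and 2 follow by inserting the explicit limit form $1-f_k(1-zQ(k))\approx Q(k)\,(1+z^{-\alpha})^{-1/\alpha}$ and composing. With $m\sim\theta n$ this gives the expression $1-\theta+(\lambda(1-\theta)^{1/\alpha}+\theta^{1/\alpha})^\alpha$ raised to $-(1/\alpha+1)$ after differentiation. Case 1 is the limit $\theta\to0$. Case 3 has $n-m=\psi\gg r$. Conditioning on $Z(m)=j$ with $jQ(\psi)$ of order one, the event $\mathcal{H}$ essentially forces exactly one of the $j$ subtrees to survive, with small size; this gives a size-biased factor $j\,Q(\psi)$. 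The law of $Q(\psi)Z(m)$ near $0$ under $\mathbf{P}(Z(m)>0)$ is $\propto x^{\alpha-1}dx$ by the first step, and $\int x\cdot x^{\alpha-1}e^{-\lambda x}e^{-x}dx$ normalized gives $(1+\lambda)^{-\alpha-1}$. Case 4 is the same with $n-m=yr$. Now a bounded number $j$ of subtrees survive, each contributing $uZ\approx$ a $Q(yr)$-scaled $M$ variable, since $Q(yr)/u\to y^{-1/\alpha}$. Summing over $j$ with weights $\alpha\Gamma(j+\alpha)/j!$ (from $x^{\alpha-1}$-density times the Poisson thinning $x^j e^{-x}/j!$ integrated) gives the series, with the factor $y$ from $Q(yr)^{-\alpha}$-normalization. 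Case 5 ($n-m\ll r$) says $Z(m)\approx Z(n)$ on the relevant scale, so the answer is the conditional law $x^\alpha$ on $[0,1]$ from the first step.

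The main obstacle is the first step: a uniform local (small-deviation) estimate for $\mathbf{P}(0<Z(k)\le x/u)$ when $u\gg Q(k)$. This needs $\tau(s)\ll k$ control and regular-variation uniformity, and I would attack it through the time-shift representation and monotone density arguments.
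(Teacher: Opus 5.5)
Your proposal is correct in outline and takes a genuinely different, more self-contained route. The paper takes two results of \cite{VDK2025} as black boxes: the small-deviation asymptotics (Theorem \ref{T_deviation}) and the reduced-process limit (Theorem \ref{T_reduced}). Via Corollary \ref{Cor1}, the latter gives $Z(m,n)=1$ with conditional probability tending to one when $n-m\gg\varphi(n)$. The exact decomposition on $\{Z(m,n)=1\}$ then yields (\ref{Laplace_size}), and cases 1--3 become one-parameter Laplace-transform computations for $Z(m)$. Case 4 is handled with factorial derivatives (Fa\`a di Bruno plus analytic continuation), with Lemma \ref{L_proper} giving tightness in $j$. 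Case 5 uses a von Bahr--Esseen moment bound.

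You proceed differently in three places.
\begin{itemize}
\item[(i)] You prove Theorem \ref{T_deviation} yourself. Your time-shift estimate $f_k(e^{-\lambda u})-f_k(0)\sim\tau\Delta(k)$ with $\tau\sim\lambda^{-\alpha}r$ is the paper's Lemma \ref{L_represent} and the telescoping sum from the proof of point 3. The continuity theorem for Laplace transforms of measures then turns $\lambda^{-\alpha}$ into $x^\alpha/\Gamma(1+\alpha)$, which is exactly (\ref{LargeDeviation}) and also gives the final $x^\alpha$ claim.
\item[(ii)] You replace the MRCA input by linearizing $f_m(t_1)-f_m(t_0)$ in $t_1-t_0\approx\mu^{-\alpha}r\Delta(n-m)$ and inverting in $\mu$. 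This recovers (\ref{Laplace_size}) directly; the linearization is the analytic form of ``only one subtree matters''.
\item[(iii)] In case 4 you integrate the exact binomial thinning given $Z(m)$ against the vague limit $\frac{\alpha}{\Gamma(1+\alpha)}x^{\alpha-1}dx$ of $\mathbf{P}(Q(yr)Z(m)\in dx)/(yr\Delta(m))$. This reproduces the weights $\alpha\Gamma(j+\alpha)y/(j!(1+\lambda)^{\alpha+j})$ without Fa\`a di Bruno. The factor $e^{-\lambda x}$ controls the tail in $j$, so neither Theorem \ref{T_reduced} nor Lemma \ref{L_proper} is needed.
\end{itemize}
The price is that you must supply yourself the uniformity estimates that the cited theorems package for the paper.

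Several loose ends need fixing.
\begin{itemize}
\item Your first display has the slowly varying factor inverted. The correct order is $x^\alpha/(u^\alpha L(u))\asymp r\,x^\alpha$, consistent with your later $\mathbf{P}(\mathcal{H}(n,r))\sim c\,rQ(n)/n$.
\item The justification ``increments of order $r$, negligible relative to $m$ when $m\gg r$'' does not cover case 1 with $m\lesssim r$. The criterion you actually need is $(t_1-t_0)/(1-t_0)\to0$. This follows in all of cases 1--3 from $1-t_0\ge Q(n-m)$ and $r=o(n-m)$.
\item Case 1 should be computed directly, using $Q(n-m)/Q(m)\to0$ and $\Delta(n-m)/\Delta(n)\to1$, not obtained as $\theta\to0$ in case 2, since that would require interchanging limits.
\item Passing from $1-f_m(1-zQ(m))$ to its $z$-derivative needs concavity in $z$ or analyticity.
\item Case 5 needs an actual concentration argument. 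Your machinery supplies it cheaply: $1-f_\chi(e^{-\mu u})=Q(\chi+\tau)$ with $\tau\sim\mu^{-\alpha}r\gg\chi$ gives $1-f_\chi(e^{-\mu u})\sim\mu u$. Hence $uZ(n)-uZ(m)\to0$ in probability given $Z(m)=j$, uniformly in $j\le N/u$. The $\varepsilon$, $N$ and boundary truncations then go as in the paper.
\end{itemize}
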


For the first sight condition (\ref{Def_H1}) looks rather restrictive and
unnatural. Let us explain that this is not the case. We know that if $Z(n)>0$
then the population size of\ a critical Galton-Watson process at a distant
moment $n$ is of order $1/\left( 1-f_{n}(0)\right) $ according to the now
classical Yaglom-Slack result (\ref{Yaglom000}). Thus, if we would like to
consider a nontypically small population size at moment $n,$ given $Z(n)>0$
we need to impose the assumption%
\begin{equation}
0<(1-f_{\varphi (n)}(0))Z(n)\leq w,  \label{CondGeneral}
\end{equation}%
where $w>0$ and $\varphi (n)=o(n)$. In view of (\ref{SurvivalProbab})\ and
properties of slowly varying functions we have, for any $w>0$%
\begin{equation*}
\frac{1-f_{\varphi (n)}(0)}{w}\sim \frac{L^{\ast }(\varphi (n))}{w\varphi
^{1/\alpha }(n)}\sim \frac{L^{\ast }(w^{\alpha }\varphi (n))}{\left(
w^{\alpha }\varphi (n)\right) ^{1/\alpha }}\sim 1-f_{w^{\alpha }\varphi
(n)}(0).
\end{equation*}%
Therefore, we may replace condition (\ref{CondGeneral}) by the condition%
\begin{equation*}
0<(1-f_{w^{\alpha }\varphi (n)}(0))Z(n)\leq 1.
\end{equation*}%
Now observe that the replacement $\varphi (n)\rightarrow w^{\alpha }\varphi
(n)$ keeps the validity of the assumption (\ref{Def_phi}) and, as a result,
the validity of all conclusions of Theorem \ref{T_Population_size}.

 We do not mention in Theorem \ref{T_Population_size} the
case $\alpha =1.$ See the next section for additional commentaries concerning this case.

\section{\protect\bigskip Auxiliary results}

The proof of Theorem \ref{T_Population_size} is based on the following
statements established in \cite{VDK2025}.

\begin{theorem}
\label{T_deviation} Let condition (\ref{MainAssump}) be valid for $\alpha
\in (0,1)$ and $\varphi (n),\ n=1,2,...,$ be a deterministic function
satisfying assumption (\ref{Def_phi}). Then
\begin{equation}
\mathbf{P}\left( \mathcal{H}(n,\varphi (n))\right) \sim \frac{1-f_{n}(0)}{%
\alpha n}\frac{1}{\Gamma \left( 1+\alpha \right) }\varphi (n).
\label{LargeDeviation}
\end{equation}
\end{theorem}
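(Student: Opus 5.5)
We prove Theorem \ref{T_deviation}, i.e. the asymptotics (\ref{LargeDeviation}), through a Laplace-transform / Tauberian argument for the conditional law of $Z(n)$ on the scale $1-f_{\varphi(n)}(0)$. Put $a_n:=1-f_{\varphi(n)}(0)$; by (\ref{SurvivalProbab}) and (\ref{Def_phi}) we have $a_n\to 0$ and $a_n\gg 1-f_n(0)$. For $\lambda>0$ consider
\begin{equation*}
\mathbf{E}\left[e^{-\lambda a_n Z(n)};Z(n)>0\right]=f_n\left(e^{-\lambda a_n}\right)-f_n(0)=\left(1-f_n(0)\right)-\left(1-f_n\left(e^{-\lambda a_n}\right)\right).
\end{equation*}
The quantity we need is the small-argument regime of the Yaglom limit (\ref{Yaglom000}). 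Since $\lambda a_n\gg 1-f_n(0)$, the Laplace variable is large relative to the natural scale of $Z(n)$ given survival, so we expect a measure on $(0,\infty)$ whose mass near $0$ behaves like $x^{\alpha}$.

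First we obtain a precise expansion of $1-f_n(s)$ for $1-s$ much larger than $1-f_n(0)$. Set $R(x)=1/(x^{\alpha}L(x))$. Slack's method compares $f_n$ with the flow of $\dot u=-u^{1+\alpha}L(u)$, which gives
\begin{equation*}
R\left(1-f_n(s)\right)=R(1-s)+\alpha n(1+o(1))
\end{equation*}
uniformly in $s$; this identity is the origin of (\ref{SimpeForm}). With $1-s=1-e^{-\lambda a_n}\sim\lambda a_n$ we get $R(\lambda a_n)\sim\lambda^{-\alpha}R(a_n)\sim\lambda^{-\alpha}\alpha\varphi(n)$. Hence $R(1-f_n(s))=\alpha n(1+\lambda^{-\alpha}\varphi(n)/n+o(\cdot))$. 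Inverting $R$ by regular variation of index $-\alpha$, with $1-f_n(0)$ corresponding to $R=\alpha n$, yields
\begin{equation*}
\left(1-f_n(0)\right)-\left(1-f_n(s)\right)\sim\left(1-f_n(0)\right)\frac{1}{\alpha}\,\frac{\lambda^{-\alpha}\varphi(n)}{n}.
\end{equation*}
Therefore
\begin{equation*}
\mathbf{E}\left[e^{-\lambda a_nZ(n)};Z(n)>0\right]\sim\frac{1-f_n(0)}{\alpha n}\,\varphi(n)\,\lambda^{-\alpha}.
\end{equation*}

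The main obstacle is controlling the error term. The $o(1)$ correction in the $R$-identity must be $o(\varphi(n))$ in absolute terms and not merely $o(n)$, since we are subtracting two quantities of size $\alpha n$. We will handle this by decomposing $f_n=f_{n-k}\circ f_k$ with $k\asymp\varphi(n)$. On the first $k$ steps we use (\ref{Yaglom000}) directly. On the remaining $n-k$ steps we use the mean value theorem: $f_{n-k}'$ evaluated near $f_k(0)$ equals $(1-f_n(0))/(1-f_k(0))$ times a factor tending to $1$, which follows from the same regular-variation arguments. This reduces the problem to $\mathbf{E}[e^{-\lambda a_nZ(k)};Z(k)>0]$-type quantities on the scale $k\sim\varphi(n)$. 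Those are then computed from the explicit limit $1-(1+\lambda^{-\alpha})^{-1/\alpha}$ in (\ref{Yaglom000}), whose large-$\lambda$ behaviour is $\alpha^{-1}\lambda^{-\alpha}$.

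Finally, we define the measures $\mu_n(dx)=\frac{\alpha n}{(1-f_n(0))\varphi(n)}\mathbf{P}(a_nZ(n)\in dx,Z(n)>0)$. Their Laplace transforms converge to $\lambda^{-\alpha}=\frac{1}{\Gamma(1+\alpha)}\int_0^\infty e^{-\lambda x}\,d(x^{\alpha})$ for every $\lambda>0$. By the extended continuity theorem for Laplace transforms of locally finite measures, $\mu_n$ converges vaguely to the measure with distribution function $x^{\alpha}/\Gamma(1+\alpha)$. Because the limit has no atoms and is finite on $(0,1]$, evaluating at $x=1$ gives $\mu_n((0,1])\to 1/\Gamma(1+\alpha)$, which is (\ref{LargeDeviation}).
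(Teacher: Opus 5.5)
Your outer frame is sound. The Laplace target $\mathbf{E}[e^{-\lambda a_nZ(n)};Z(n)>0]\sim\lambda^{-\alpha}\varphi(n)\frac{1-f_n(0)}{\alpha n}$ is the right one, and the extended continuity theorem then yields (\ref{LargeDeviation}). The gap is that this target is never established. The $R$-identity controls errors only to $o(n)$, as you note. The proposed repair then rests on a false estimate: $f_{n-k}'$ near $f_k(0)$ is \emph{not} $\frac{1-f_n(0)}{1-f_k(0)}(1+o(1))$. Since $f_{n-k}$ is convex and $f_{n-k}(f(x))-f_{n-k}(x)=f(y)-y$ with $y=f_{n-k}(x)$, one has $f_{n-k}'(x)\le\frac{f(y)-y}{f(x)-x}\le f'_{n-k}(f(x))$. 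At $x=f_k(0)$ this gives
\begin{equation*}
f_{n-k}'(f_k(0))\le\frac{f_{n+1}(0)-f_n(0)}{f_{k+1}(0)-f_k(0)}\sim\frac{k}{n}\cdot\frac{1-f_n(0)}{1-f_k(0)},
\end{equation*}
which is smaller than your value by the factor $k/n\to0$; the same order holds on the whole interval between $f_k(0)$ and $f_k(s)$. With your value, the reduction would give $f_n(s)-f_n(0)\approx(1-f_n(0))\,\mathbf{E}[e^{-\lambda a_nZ(k)}\mid Z(k)>0]$. That is of order $1-f_n(0)$, too large by a factor of order $n/\varphi(n)$. Even with the order corrected, a single mean-value point cannot give a factor tending to $1$ when $k\sim c\varphi(n)$. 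Indeed, the sandwich above together with $f'_{n-k}(f(x))=f'_{n-k}(x)f'(y)/f'(x)$ shows $f'_{n-k}(x)\sim\Delta(n)/(f(x)-x)$, where $\Delta(n):=\frac{1-f_n(0)}{\alpha n}$, and this varies like $(1-x)^{-1-\alpha}$. Meanwhile, by (\ref{Yaglom000}), the ratio $(1-f_k(0))/(1-f_k(s))$ tends to $(1+c^{-1}\lambda^{-\alpha})^{1/\alpha}\neq1$. You would have to integrate $f'_{n-k}$ over that interval, or send $c\to\infty$ after $n\to\infty$ with a justified interchange of limits. Your appeal to the large-$\lambda$ behaviour of (\ref{Yaglom000}) silently requires the latter, and neither is carried out.

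The paper itself does not prove Theorem \ref{T_deviation}; it imports it from \cite{VDK2025}. However, the sandwich-and-telescoping device it uses to obtain (\ref{Yagl2}) gives your Laplace estimate in a few lines, with no need for the $R$-identity or the decomposition. Let $r=r_n$ satisfy $f_r(0)\le e^{-\lambda a_n}<f_{r+1}(0)$. Since $1-e^{-\lambda a_n}\sim\lambda(1-f_{\varphi(n)}(0))$, the argument of Lemma \ref{L_represent} gives $r\sim\lambda^{-\alpha}\varphi(n)=o(n)$. Monotonicity of $f_n$ and $f_n\circ f_r=f_{n+r}$ yield
\begin{equation*}
f_{n+r}(0)-f_n(0)\le\mathbf{E}\left[e^{-\lambda a_nZ(n)};Z(n)>0\right]\le f_{n+r+1}(0)-f_n(0).
\end{equation*}
Each bound is a sum of at most $r+1$ increments $f_{k+1}(0)-f_k(0)=(1-f_k(0))^{1+\alpha}L(1-f_k(0))$ with $n\le k\le n+r$. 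By (\ref{SimpeForm}) each increment is $\sim\Delta(k)$, and $\Delta(k)/\Delta(n)\to1$ uniformly in this range because $\Delta$ is regularly varying and $r=o(n)$. Hence both bounds are $\sim r\Delta(n)\sim\lambda^{-\alpha}\varphi(n)\frac{1-f_n(0)}{\alpha n}$, and your final continuity-theorem step completes the proof.
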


\begin{remark}
\label{Rem1} Setting $T=\left( 1-f_{\varphi (n)}(0)\right) ^{-1}$ and
observing that
\begin{equation*}
\frac{L\left( 1/T\right) }{T^{\alpha }}=\left( 1-f_{\varphi (n)}(0)\right)
^{\alpha }L\left( 1-f_{\varphi (n)}(0)\right) \sim \frac{1}{\alpha \varphi
(n)}  \label{Repres1}
\end{equation*}%
as $n\rightarrow \infty $ by (\ref{SimpeForm}), we may rewrite (\ref%
{LargeDeviation}) as
\begin{equation}
\mathbf{P}\left( 0<Z(n)\leq T\right) \sim \frac{1-f_{n}(0)}{\alpha n}\frac{1%
}{\alpha \Gamma \left( 1+\alpha \right) }\frac{T^{\alpha }}{L\left(
1/T\right) }.  \label{ModifDevoation}
\end{equation}
\end{remark}

\begin{remark}
\label{Rem2} If
\begin{equation}
f^{\prime }(1)=1,\quad \sigma ^{2}=f^{\prime \prime }(1)\in (0,\infty ),
\label{FinVariance}
\end{equation}%
then $L(x)\to \sigma ^{2}/2$ as $x\downarrow 0$ and (see, for instance,
\cite{AN72}, Ch.1, Sec. 9)
\begin{equation*}
1-f_{n}(0)\sim \frac{2}{\sigma ^{2}n},\quad n\rightarrow \infty .
\end{equation*}%
Hence it follows that if $n\gg \varphi (n)\rightarrow \infty $ then (\ref%
{LargeDeviation}) is replaced by
\begin{equation*}
\mathbf{P}\left( 0<Z(n)<T\right) \sim \frac{4}{\sigma ^{4}n^{2}}T,
\end{equation*}%
and is an evident corollary of the main result in \cite{NW2006}.

If, however, $\alpha=1$ and $\sigma^2=\infty$, then an asymptotic representation for $\mathbf{P}\left( 0<Z(n)\leq T\right)$ similar to (\ref{ModifDevoation}) is now know only for the case
\begin{equation}
\mathbf{P}\left( \xi \geq j\right) =\frac{\bar{L}(1/j)}{j^{2}},
\label{Alpha_1}
\end{equation}%
where $\bar{L}(z)$ is a function slowly varying at zero (see Remark 2 in \cite{VDK2025}).
 Since in the arguments to follow we use only the form of the right-hand side of (\ref{ModifDevoation}),
  one may check that all the statements of Theorem~\ref{T_Population_size} remain valid if either the condition (\ref{FinVariance}) or the conditin (\ref{Alpha_1}) is in force.
\end{remark}

We fix $n\in \left\{ 1,2,...\right\} $ and introduce the so-called reduced
Galton-Watson process $\{Z(m,n),0\leq m\leq n\}$, where $Z(m,n)$ is the
number of such generation $m$ particles in the original process $\mathcal{Z}$
that have nonempty offspring in generation $n$ (with the agreement that $%
Z(n,n)=Z(n))$.

\begin{theorem}
\label{T_reduced}(see \cite{VDK2025}) Let condition (\ref{MainAssump}) be
valid for $\alpha \in (0,1)$ and assumption (\ref{Def_phi}) hold true. Then,
for any $y\in (0,\infty )$ and any $j\geq 1$%
\begin{equation}
\lim_{n\rightarrow \infty }\mathbf{P}\left( Z(n-y\varphi (n),n)=j|\mathcal{H}%
(n,\varphi (n))\right) =\frac{\alpha \Gamma \left( j+\alpha \right) }{j!}%
yM^{\ast j}(y^{-1/\alpha }).  \label{InfVar}
\end{equation}
\end{theorem}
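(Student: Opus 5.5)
The plan is to decompose at the intermediate time $m:=n-k$, where $k:=y\varphi(n)$, and use the branching property. Put $T:=(1-f_{\varphi(n)}(0))^{-1}$ and $Q(\cdot)=1-f_{\cdot}(0)$. Conditionally on $Z(m)=N$, each of the $N$ particles independently has descendants at time $n$ with probability $Q(k)$. The numbers of descendants at time $n$ of the surviving particles are i.i.d. copies of $Z(k)$ conditioned on $Z(k)>0$. Hence
\begin{equation*}
\mathbf{P}\left( Z(m,n)=j,\mathcal{H}(n,\varphi (n))\right) =\mathbf{P}\left( Z(m,n)=j\right) \mathbf{P}\left( S_{j}(k)\leq T\right) ,
\end{equation*}
where $S_{j}(k)$ is the sum of $j$ independent copies of $Z(k)$ given $Z(k)>0$. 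The statement thus splits into two separate asymptotic problems, followed by division by $\mathbf{P}(\mathcal{H}(n,\varphi(n)))$ from Theorem \ref{T_deviation}.

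\textbf{The factor $\mathbf{P}(S_j(k)\le T)$.} By (\ref{Yaglom000}), $Q(k)S_{j}(k)$ converges in distribution to $M^{\ast j}$. By (\ref{SurvivalProbab}) and slow variation of $L^{\ast}$, $Q(k)T=Q(y\varphi(n))/Q(\varphi(n))\to y^{-1/\alpha}$. Since $M$ has no atoms (its Laplace transform is explicit and continuous, with $M$ absolutely continuous), we get
\begin{equation*}
\mathbf{P}(S_j(k)\le T)\to M^{\ast j}(y^{-1/\alpha}).
\end{equation*}

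\textbf{The factor $\mathbf{P}(Z(m,n)=j)$.} I would work with the generating function
\begin{equation*}
\mathbf{E}s^{Z(m,n)}=f_{m}\left( 1-Q(k)(1-s)\right) .
\end{equation*}
Let $\psi$ be the (asymptotic) inverse of $Q$, so that $1-f_{m}(1-u)=Q(m+\psi(u))$ up to negligible errors; this is exact when $u=Q(r)$, since $f_m(f_r(0))=f_{m+r}(0)$. Regular variation gives $\psi(Q(k)(1-s))\sim k(1-s)^{-\alpha}$. Then, using $k=o(n)$ and $Q(n+t)=Q(n)(1-t/(\alpha n)+o(t/n))$ for $t=o(n)$ (which follows from (\ref{SimpeForm})),
\begin{equation*}
1-\mathbf{E}s^{Z(m,n)}\approx Q\left(n-k+k(1-s)^{-\alpha}\right)\approx Q(n)\Big(1-\frac{k}{\alpha n}\big((1-s)^{-\alpha}-1\big)\Big).
\end{equation*}
Extracting the coefficient of $s^j$, $j\ge1$, and using $[s^j](1-s)^{-\alpha}=\Gamma(j+\alpha)/(\Gamma(\alpha)j!)$, gives
\begin{equation*}
\mathbf{P}(Z(m,n)=j)\sim Q(n)\frac{y\varphi(n)}{\alpha n}\frac{\Gamma(j+\alpha)}{\Gamma(\alpha)j!}.
\end{equation*}
Dividing by $\mathbf{P}(\mathcal{H}(n,\varphi(n)))\sim Q(n)\varphi(n)/(\alpha n\Gamma(1+\alpha))$ and using $\Gamma(1+\alpha)/\Gamma(\alpha)=\alpha$ yields exactly $\alpha\Gamma(j+\alpha)y/j!$. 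Multiplying by the first factor gives (\ref{InfVar}).

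\textbf{Main obstacle.} The hard step is making the coefficient extraction rigorous: passing from asymptotics of the generating function to asymptotics of its individual Taylor coefficients, at the relative precision $k/n\to0$. I would first try to prove the expansion
\begin{equation*}
\frac{n}{kQ(n)}\left(Q(n)-1+f_m(1-Q(k)(1-s))\right)\to \frac{1}{\alpha}\big((1-s)^{-\alpha}-1\big)
\end{equation*}
uniformly on compact subsets of the complex disk $|s|<1$. This would come from the relation $f_m(1-Q(k)(1-s))=f_{m}\circ f_{k}$ evaluated at suitable arguments, together with Slack's estimates extended to complex arguments near $1$. Cauchy's integral formula on a small circle then transfers the convergence to each coefficient. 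If the complex extension proves awkward, a fallback is to work with real $s\in[0,1)$ and the derivatives $f_m^{(j)}$: these are monotone and absolutely monotone, so convergence of the functions on an interval gives convergence of derivatives.
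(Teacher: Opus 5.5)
Your proof is correct. The paper does not prove this theorem itself; it quotes it from the authors' earlier work. However, its Section 3.4 at $\lambda=0$ runs the same argument you outline:
\begin{itemize}
\item the factorization $\mathbf{P}(Z(m,n)=j,\mathcal{H}(n,\varphi(n)))=\mathbf{P}(Z(m,n)=j)\,\mathbf{P}(S_j(k)\le T)$;
\item the Yaglom limit $M^{\ast j}(y^{-1/\alpha})$ for the second factor;
\item the identity $\mathbf{P}(Z(m,n)=j)=\frac{Q(k)^j}{j!}f_m^{(j)}(f_k(0))$, analysed through the first-order behaviour of $f_m$ near $f_k(0)$ (from the integer sandwich of Lemma~\ref{L_represent} and $Q(i)-Q(i+1)\sim Q(i)/(\alpha i)$);
\item division by Theorem~\ref{T_deviation}.
\end{itemize}

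The real difference is how the $j$-th derivative is extracted. The paper evaluates $f_m$ at $f_k^{1+\lambda}(0)$, proves \eqref{Yagl2} for real $\lambda>0$, and continues analytically to $\mathrm{Re}\,\lambda>-1$. It then needs the Fa\`{a} di Bruno formula, Bell polynomials, Stirling numbers and an induction to turn $\lambda$-derivatives back into $f_m^{(j)}$. Your linear parametrization $s\mapsto 1-Q(k)(1-s)$ gives the generating function of $Z(m,n)$ itself. Its Taylor coefficients at $s=0$ are exactly the probabilities you want, so that combinatorial layer disappears.

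Your fallback also settles the ``main obstacle'' without any complex version of Slack's estimates. The functions $g_n(s)=\frac{n}{kQ(n)}\sum_{j\ge1}\mathbf{P}(Z(m,n)=j)s^j$ have nonnegative coefficients. Hence convergence at a real $s_1<1$ bounds $|g_n|$ uniformly on $|s|\le s_1$. Vitali's theorem (or diagonal extraction of coefficients plus dominated convergence) then turns pointwise convergence on $[0,1)$ into convergence of each coefficient. This positivity is also the local boundedness that the paper's analytic-continuation step uses without stating it.

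Pointwise convergence for fixed $s\in(0,1)$ is made rigorous as follows.
\begin{itemize}
\item Take the integer $r$ with $Q(r+1)\le Q(k)(1-s)<Q(r)$. Then $f_{m+r}(0)<f_m(1-Q(k)(1-s))\le f_{m+r+1}(0)$ and $r\sim k(1-s)^{-\alpha}$.
\item Consequently $r-k\sim k((1-s)^{-\alpha}-1)$, which tends to infinity and is $o(n)$.
\item Both bounds minus $f_n(0)$ are $\sim (r-k)Q(n)/(\alpha n)$. The sandwich width, of order $Q(n)/n$, is $o(kQ(n)/n)$, so it is negligible at the required precision.
\end{itemize}

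One minor fix concerns your reason for $M^{\ast j}$ being continuous at $y^{-1/\alpha}$. Continuity of a Laplace transform says nothing about atoms, so the parenthetical justification does not work. The fact itself is true. Note that $\int_0^\infty e^{-\lambda x}(1-M(x))\,dx=(1+\lambda^{\alpha})^{-1/\alpha}$. This is the Laplace transform of $G^{1/\alpha}S_\alpha$, where $G$ is Gamma-distributed with shape $1/\alpha$ and $S_\alpha$ is an independent positive stable variable with $\mathbf{E}e^{-\lambda S_\alpha}=e^{-\lambda^{\alpha}}$. That variable has a continuous density on $(0,\infty)$. So the monotone function $1-M$ agrees a.e.\ with a continuous function, and is therefore continuous.
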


Let
\begin{equation*}
\beta (n):=\max \left( 0\leq m<n:Z(m,n)=1\right)
\end{equation*}%
be the birth moment of the most recent common ancestor (MRCA) of all
particles existing in the population at moment $n$ and let $d(n):=n-\beta
(n) $ be the distance from the moment $n$ of observation to the birth moment
of the MRCA. The following statement describing the distribution of the
random variable $d(n)$ is an evident corollary of Theorem \ref{T_reduced}.
This claim plays an important role in our subsequent arguments.

\begin{corollary}
\label{Cor1}Under the conditions of Theorem \ref{T_reduced}, for any $y\in
(0,\infty )$%
\begin{eqnarray*}
&&\lim_{n\rightarrow\infty}\mathbf{P}\left( Z(n-y\varphi
(n),n)=1|\mathcal{H}(n,\varphi (n))\right)\\
&&\qquad\qquad\quad=\lim_{n\rightarrow \infty }\mathbf{P}\left(
d(n)\leq y\varphi (n)|\mathcal{H}(n,\varphi (n))\right)
=\alpha\Gamma (1+\alpha) yM(y^{-1/\alpha }).
\end{eqnarray*}
\end{corollary}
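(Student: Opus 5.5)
The plan is to specialize Theorem \ref{T_reduced} to $j=1$ and then identify the event $\{Z(n-y\varphi(n),n)=1\}$ with $\{d(n)\leq y\varphi(n)\}$.

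First, the value of the limit. Put $j=1$ in (\ref{InfVar}). Since $M^{\ast 1}=M$, $1!=1$ and $\Gamma(1+\alpha)$ appears as the Gamma factor, we get
\begin{equation*}
\lim_{n\rightarrow \infty }\mathbf{P}\left( Z(n-y\varphi (n),n)=1|\mathcal{H}(n,\varphi (n))\right) =\alpha \Gamma (1+\alpha )yM(y^{-1/\alpha }).
\end{equation*}
This is the first equality and the final value.

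Next, the identification of events, which rests on monotonicity of the reduced process. Each particle counted in $Z(m+1,n)$ has exactly one parent in generation $m$, and that parent also has descendants at time $n$. Hence $m\mapsto Z(m,n)$ is nondecreasing on $0\leq m\leq n$. On $\mathcal{H}(n,\varphi(n))$ we have $Z(n)>0$, so $Z(0,n)=1$ and $Z(m,n)\geq 1$ for all $m\leq n$.

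By monotonicity, the set $\{m<n: Z(m,n)=1\}$ is an initial segment $\{0,1,\dots,\beta(n)\}$. For $k=n-y\varphi(n)$ (take the integer part if needed; it is $<n$ for large $n$), this gives
\begin{equation*}
Z(k,n)=1 \iff k\leq \beta(n) \iff d(n)=n-\beta(n)\leq n-k=y\varphi(n).
\end{equation*}
So the two events coincide on $\mathcal{H}(n,\varphi(n))$, and their conditional probabilities are equal for every large $n$. In particular the second limit exists and equals the first.

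The only point requiring care is rounding $y\varphi(n)$ to an integer. Theorem \ref{T_reduced} is stated with the same convention for $n-y\varphi(n)$, so the same integer part is used on both sides and the identity is exact. There is no genuine obstacle; the whole argument reduces to the monotonicity of $Z(\cdot,n)$.
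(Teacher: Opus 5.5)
Your proposal is correct. It is exactly the argument the paper has in mind when it calls the statement an evident corollary of Theorem~\ref{T_reduced}: take $j=1$ in (\ref{InfVar}), then use monotonicity of $m\mapsto Z(m,n)$ to show that $\{Z(n-y\varphi(n),n)=1\}=\{d(n)\leq y\varphi(n)\}$ on $\{Z(n)>0\}$.

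One small correction to your monotonicity step. It follows from the fact that every particle counted in $Z(m,n)$, $m<n$, has at least one child counted in $Z(m+1,n)$, so the parent map is onto. It does not follow from the fact you cite, that the parent of a particle counted in $Z(m+1,n)$ is counted in $Z(m,n)$; that is the wrong direction.
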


It follows from (\ref{Yaglom000}) that%
\begin{equation*}
\int_{0}^{\infty }e^{-\lambda y}dM\left( x\right) =1-\frac{1}{\left(
1+\lambda ^{-\alpha }\right) ^{1/\alpha }}\sim \frac{1}{\alpha }\frac{1}{%
\lambda ^{a}},\
\end{equation*}%
as $\lambda \rightarrow \infty $. Hence we conclude by a Tauberian theorem
(see, for instance, \cite[Ch.XIII, Sec.5, Theorems 2 and 3]{Fel}) that%
\begin{equation*}
M\left( x\right) \sim \frac{1}{\alpha \Gamma \left( 1+\alpha \right) }%
x^{\alpha }
\end{equation*}%
as $x\rightarrow 0$. Thus,
\begin{equation}
\lim_{y\rightarrow \infty }\alpha \Gamma \left( 1+\alpha \right)
yM(y^{-1/\alpha })=1.  \label{LOcal}
\end{equation}%
This fact combined with Corollary \ref{Cor1} shows that, given $\mathcal{H}%
(n,\varphi (n))$ the distance to the MRCA of all individuals of the $n$-th
generation is of order $\varphi (n)$. We will use this result later on.

We need one more statement related to the MRCA. Namely, we show that the
limiting distribution specified by the right-hand side of (\ref{InfVar}) is
proper.

\begin{lemma}
\label{L_proper} For any $y>0$%
\begin{equation*}
\sum_{j=1}^{\infty }\frac{\alpha \Gamma \left( j+\alpha \right) }{j!}%
yM^{\ast j}(y^{-1/\alpha })=1.
\end{equation*}
\end{lemma}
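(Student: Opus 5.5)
The plan is to compute the Laplace--Stieltjes transform of the function
\begin{equation*}
G(x):=\sum_{j=1}^{\infty }c_{j}M^{\ast j}(x),\qquad c_{j}:=\frac{\alpha \Gamma
\left( j+\alpha \right) }{j!},\quad x\geq 0,
\end{equation*}
and to show that $G(x)=x^{\alpha }$. The statement then follows at once, since $yG(y^{-1/\alpha })=y\cdot y^{-1}=1$. Each $M^{\ast j}$ is a distribution function on $[0,\infty )$ and $c_{j}>0$, so $G$ is a nondecreasing function, possibly infinite a priori. The measure $dG=\sum_{j}c_{j}\,dM^{\ast j}$ is a well-defined nonnegative measure on $[0,\infty )$. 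We also note that $M$ has no atom at $0$: its transform $1-(1+\lambda ^{-\alpha })^{-1/\alpha }$ tends to $0$ as $\lambda \rightarrow \infty $. Hence none of the $M^{\ast j}$ has an atom at $0$ either.

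\textbf{Step 1 (transform).} Write $\phi (\lambda ):=\int_{0}^{\infty }e^{-\lambda x}dM(x)=1-(1+\lambda ^{-\alpha })^{-1/\alpha }$ by (\ref{Yaglom000}). All terms are nonnegative, so monotone convergence gives
\begin{equation*}
\int_{0}^{\infty }e^{-\lambda x}dG(x)=\sum_{j=1}^{\infty }c_{j}\phi ^{j}(\lambda ).
\end{equation*}
We use the binomial series $\sum_{j\geq 0}\frac{\Gamma (j+\alpha )}{j!\,\Gamma (\alpha )}s^{j}=(1-s)^{-\alpha }$ for $0\leq s<1$. This yields
\begin{equation*}
\sum_{j=1}^{\infty }c_{j}s^{j}=\alpha \Gamma (\alpha )\left( (1-s)^{-\alpha }-1\right) =\Gamma (1+\alpha )\left( (1-s)^{-\alpha }-1\right) .
\end{equation*}
Substituting $s=\phi (\lambda )$, with $1-\phi (\lambda )=(1+\lambda ^{-\alpha })^{-1/\alpha }$, gives $(1-\phi )^{-\alpha }-1=\lambda ^{-\alpha }$. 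Therefore
\begin{equation*}
\int_{0}^{\infty }e^{-\lambda x}dG(x)=\Gamma (1+\alpha )\lambda ^{-\alpha }<\infty ,\qquad \lambda >0.
\end{equation*}

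\textbf{Step 2 (inversion).} The measure $\alpha x^{\alpha -1}dx$ on $(0,\infty )$ has transform $\alpha \Gamma (\alpha )\lambda ^{-\alpha }=\Gamma (1+\alpha )\lambda ^{-\alpha }$. Two measures on $[0,\infty )$ whose Laplace transforms are finite and coincide for all $\lambda >0$ are equal (Feller, Ch.~XIII). Hence $dG=\alpha x^{\alpha -1}dx$. Since $dG$ has no mass at $0$, integration gives $G(x)=x^{\alpha }$. In particular the series defining $G$ converges for every $x$. Putting $x=y^{-1/\alpha }$ yields the claim.

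The only delicate points are the justification of the termwise interchange and the uniqueness theorem for possibly infinite (but locally finite) measures; both are handled by nonnegativity and the finiteness of the transform. The main computational step is recognizing the generating function of $c_{j}$, which I expect to go through as above.
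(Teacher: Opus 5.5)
Your proposal is correct and is essentially the paper's own argument. The paper also reduces the claim to $\sum_{j}\frac{\alpha\Gamma(j+\alpha)}{j!}M^{\ast j}(x)=x^{\alpha}$, evaluates the Laplace transform termwise via the generating function $\Gamma(1+\alpha)\left((1-t)^{-\alpha}-1\right)$ with $t=\phi(\lambda)$ so that $(1-t)^{-\alpha}-1=\lambda^{-\alpha}$, and inverts by Feller's uniqueness theorem; your extra remarks (monotone convergence for the interchange, and no atom at $0$ so that $G(0)=0$) only make explicit details the paper leaves implicit.
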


\textbf{Proof}. The statement of the lemma will be proved if we show that,
for any $x>0$

\begin{equation}
U(x):=\sum_{j=1}^{\infty }\frac{\alpha \Gamma \left( j+\alpha \right) }{j!}%
M^{\ast j}(x)=x^{\alpha },  \label{Term0}
\end{equation}%
or, since $U(\cdot )$ is a measure, that (see \cite{Fel}, Ch. XIII, Sec.1,
Theorem 1a), for any $\lambda >0$%
\begin{equation}
\sum_{j=1}^{\infty }\frac{\alpha \Gamma \left( j+\alpha \right) }{j!}%
\int_{0}^{\infty }e^{-\lambda x}dM^{\ast j}(x)=\int_{0}^{\infty }e^{-\lambda
x}dx^{\alpha }=\frac{\Gamma \left( \alpha +1\right) }{\lambda ^{\alpha }}.
\label{Term1}
\end{equation}%
It follows from (\ref{Yaglom000}) that
\begin{equation*}
\int_{0}^{\infty }e^{-\lambda x}dM^{\ast j}(x)=\left( 1-\frac{\lambda }{%
\left( 1+\lambda ^{\alpha }\right) ^{1/\alpha }}\right) ^{j}.
\end{equation*}%
Now we set%
\begin{equation}
1-\frac{\lambda }{\left( 1+\lambda ^{\alpha }\right) ^{1/\alpha }}=t.
\label{Def_t}
\end{equation}%
It is not difficult to check that%
\begin{equation}
\sum_{j=1}^{\infty }\frac{\alpha \Gamma \left( j+\alpha \right) }{j!}%
t^{j}=\Gamma \left( \alpha +1\right) \left( \frac{1}{\left( 1-t\right)
^{\alpha }}-1\right) .  \label{Term2}
\end{equation}%
On the other hand, resolving (\ref{Def_t}) with respect to $\lambda ^{\alpha
}$ we get%
\begin{equation}
\lambda ^{-\alpha }=\frac{1}{\left( 1-t\right) ^{\alpha }}-1.  \label{Term3}
\end{equation}

Combination of (\ref{Def_t}), (\ref{Term2}) and (\ref{Term3}) proves (\ref%
{Term1}) and, therefore, (\ref{Term0}).

Lemma \ref{L_proper} is proved.

We note that under conditions (\ref{FinVariance}) a statement similar to
Theorem \ref{T_reduced} was proved in \cite{LV2018}.

For further references we would like to mention that if $mn^{-1}\rightarrow 0$ as $n\rightarrow \infty $ then it follows
from (\ref{SurvivalProbab}) and properties of slowly varying functions that
\begin{equation}
\frac{1-f_{n}(0)}{1-f_{m}(0)}\rightarrow 0  \label{Slowly}
\end{equation}%
as $n\rightarrow \infty $. Indeed, since the function $L^*(n)$\thinspace\ from (\ref{SurvivalProbab}) is
slowly varying, there exists an $a>0$ and functions $\,\gamma(x)$ and
$\,\theta(x)$\thinspace\ satisfying $\,\gamma(x)\rightarrow\gamma\in\left(
0,\infty\right)  $\thinspace\ and $\,\theta(x)\rightarrow0$\thinspace\ as
$x\uparrow\infty,$\thinspace\ such that (see \cite[Section~1.5]{Sen76})
\[
L^*(x) =\ \gamma(n)\exp\!\Big[\int_{a}^{n}\frac{\theta
(x)}{x}\,\mathrm{d}x\,\Big ].
\]
Hence, it follows easily that for any $\epsilon>0,$\thinspace\ there exists
$\,w=w(\epsilon)$\thinspace\ such that
$$
\Big(\frac{n}{m}\Big)^{-\epsilon}\,\leq\, \frac{L^*(n)}{L^*(m)} \leq \Big(\frac{n}{m}\Big)^{\epsilon} \quad\text{as }\,m/n\leq w.
$$
Evidently, these estimates combined with (\ref{SurvivalProbab})  imply (\ref{Slowly}).
\section{Population size}

We now prove Theorem \ref{T_Population_size}. It follows from Corollary \ref%
{Cor1} and (\ref{LOcal}) that if $n-m\gg \varphi (n)\rightarrow \infty $
then
\begin{equation}
\mathbf{E}\left[ e^{-\lambda Z(m)/T}|\mathcal{H}(n,\varphi (n))\right] =%
\mathbf{E}\left[ e^{-\lambda Z(m)/T};Z(m,n)=1|\mathcal{H}(n,\varphi (n))%
\right] +o(1)  \label{MRCA_limit}
\end{equation}%
for any $T>0$ and any $\lambda $ with $Re\lambda >0$. Further we note that
if $Z(m,n)=1$ then all particles of the $n$-th generation are descendants of
the only particle of the $m$-th generation. Therefore,
\begin{eqnarray*}
&& \mathbf{E}\left[ e^{-\lambda Z(m)/T};Z(m,n)=1,\mathcal{H}(n,\varphi
(n))\right]\\
&&\qquad\qquad\qquad\quad=\mathbf{E}\left[ Z(m)e^{-\lambda Z(m)/T}f_{n-m}^{Z(m)-1}(0)\right]
\mathbf{P}\left( \mathcal{H}(n-m,\varphi (n))\right) .
\end{eqnarray*}
Hence, using Theorem \ref{T_deviation} we conclude that if condition (\ref%
{Def_phi}) holds and $n-m\gg \varphi (n)\rightarrow \infty $ then\
\begin{eqnarray}
&&\mathbf{E}\left[ e^{-\lambda Z(m)/T}|\mathcal{H}(n,\varphi (n))\right]
\notag \\
&&\qquad=\mathbf{E}\left[ Z(m)e^{-\lambda Z(m)/T}f_{n-m}^{Z(m)-1}(0)\right]
\frac{\mathbf{P}\left( \mathcal{H}(n-m,\varphi (n))\right) }{\mathbf{P}%
\left( \mathcal{H}(n,\varphi (n))\right) }+o(1)  \notag \\
&&\qquad=\mathbf{E}\left[ Z(m)e^{-\lambda Z(m)/T}f_{n-m}^{Z(m)-1}(0)\right]
\frac{1-f_{n-m}(0)}{1-f_{n}(0)}\frac{n}{n-m}+o(1).  \label{Laplace_size}
\end{eqnarray}%
We will use (\ref{Laplace_size}) to prove points 1-3 of Theorem \ref%
{T_Population_size}.

\subsection{The distribution of $Z(m)$ given $m=o(n)$ and $n\gg \protect%
\varphi (n)$}

We set $T^{-1}:=1-f_{m}(0)$ and, observing that
\begin{equation*}
\frac{1-f_{n-m}(0)}{1-f_{n}(0)}\frac{n}{n-m}\rightarrow 1
\end{equation*}%
if $m=o(n)$ and $n\rightarrow \infty ,$ rewrite (\ref{Laplace_size}) as
\begin{equation*}
\mathbf{E}\left[ e^{-\lambda Z(m)/T}|\mathcal{H}(n,\varphi (n))\right]
=(1+o(1))\mathbf{E}\left[ Z(m)e^{-\lambda Z(m)/T}f_{n-m}^{Z(m)-1}(0)\right]
+o(1).
\end{equation*}%
Further,
\begin{equation*}
\mathbf{E}\left[ Z(m)e^{-\lambda Z(m)/T}f_{n-m}^{Z(m)-1}(0)\right] =\frac{1}{%
f_{n-m}(0)}\mathbf{E}\left[ Z(m)e^{-(\lambda -T\log f_{n-m}(0))Z(m)/T}\right]
.
\end{equation*}%
It view of the equivalence $-\log f_{n-m}(0)\sim 1-f_{n-m}(0)$ as $%
n-m\rightarrow \infty $ and the relation%
\begin{equation*}
T\left( 1-f_{n-m}(0)\right) =\frac{1-f_{n-m}(0)}{1-f_{m}(0)}\rightarrow 0
\end{equation*}%
valid according to (\ref{SurvivalProbab}) if $m=o(n),$ we conclude that, for
any $\varepsilon >0$
\begin{eqnarray}
\frac{1}{f_{n-m}(0)}\mathbf{E}\left[ Z(m)e^{-\left( \lambda +\varepsilon
\right) Z(m)/T}\right] &\leq &\mathbf{E}\left[ Z(m)e^{-\lambda
Z(m)/T}f_{n-m}^{Z(m)-1}(0)\right]  \notag \\
&\leq &\mathbf{E}\left[ Z(m)e^{-\lambda Z(m)/T}\right]
\label{AboveBelow1}
\end{eqnarray}%
for sufficiently large $n$ and $m$. We know by (\ref{Yaglom000}) that%
\begin{equation}
\lim_{m\rightarrow \infty }\mathbf{E}\left[ e^{-\lambda
(1-f_{m}(0))Z(m)}|Z(m)>0\right] =1-\frac{1}{\left( 1+\lambda ^{-\alpha
}\right) ^{1/\alpha }}  \label{Yagl}
\end{equation}%
for all $\lambda >0$. Since the prelimiting functions at the left-hand side
of (\ref{Yagl}) are analytic functions in the domain $Re\lambda >0$ and the
function%
\begin{equation*}
G(\lambda ):=1-\frac{1}{\left( 1+\lambda ^{-\alpha }\right) ^{1/\alpha }}
\end{equation*}%
admits an analytic continuation in the same domain if we take the branch
with $G(1)=1-2^{-1/\alpha }$, it follows from the uniqueness theorem for analytic
functions that (\ref{Yagl}) holds for this branch for all $\lambda $ satisfying $Re\lambda >0
$. This, in turn, implies that the first derivatives of the preliminary
functions converge to the first derivative of the  selected branch $G(\lambda)$ in the domain $Re\lambda >0.$  In particular,
\begin{eqnarray}
&&\lim_{m\rightarrow \infty }\frac{\partial }{\partial \lambda }\mathbf{E}%
\left[ e^{-\lambda \left( 1-f_{m}(0)\right) Z(m)}|Z(m)>0\right]  \notag \\
&&\qquad \qquad =\lim_{m\rightarrow \infty }\frac{\partial }{\partial
\lambda }\frac{\mathbf{E}\left[ e^{-\lambda \left( 1-f_{m}(0)\right) Z(m)}%
\right] -f_{m}(0)}{1-f_{m}(0)}  \notag \\
&&\qquad \qquad =\lim_{m\rightarrow \infty }\frac{1}{1-f_{m}(0)}\frac{%
\partial }{\partial \lambda }\mathbf{E}\left[ e^{-\lambda \left(
1-f_{m}(0)\right) Z(m)}\right]  \notag \\
&&\qquad \qquad =-\lim_{m\rightarrow \infty }\mathbf{E}\left[
Z(m)e^{-\lambda \left( 1-f_{m}(0)\right) Z(m)}\right]  \notag \\
&&\qquad \qquad =\frac{\partial }{\partial \lambda }\left( 1-\frac{1}{\left(
1+\lambda ^{-\alpha }\right) ^{1/\alpha }}\right) =-\frac{1}{\left(
1+\lambda ^{a}\right) ^{1/\alpha +1}}  \label{Asym1}
\end{eqnarray}%
for any $\lambda >0$. Hence, using (\ref{AboveBelow1}) and letting $%
\varepsilon $ tend to zero we conclude that%
\begin{eqnarray*}
\lim_{n\gg m\rightarrow \infty }\mathbf{E}\left[ e^{-\lambda Z(m)/T}|%
\mathcal{H}(n,\varphi (n))\right] &=&\lim_{n\gg m\rightarrow \infty }\mathbf{%
E}\left[ Z(m)e^{-\lambda Z(m)/T}f_{n-m}^{Z(m)-1}(0)\right] \\
&=&\frac{1}{\left( 1+\lambda ^{a}\right) ^{1/\alpha +1}}.
\end{eqnarray*}

This proves point 1) of Theorem \ref{T_Population_size}.

\subsection{The distribution of $Z(m)$ if $m\sim \protect\theta n$ for some $%
\protect\theta \in (0,1)$\newline
}

In this section and in the sections to follow we consider the products of
the type $\theta n$ and $(1-\theta )n$ as $\left[ \theta n\right] $ and $%
\left[ (1-\theta )n\right] $. Setting, as before, $T^{-1}:=1-f_{m}(0)$ we
deduce by (\ref{SurvivalProbab}) that if $m\sim \theta n\rightarrow \infty $
then
\begin{equation*}
T\log f_{n-m}(0)\sim -\frac{1-f_{n-m}(0)}{1-f_{m}(0)}\sim -\left( \frac{%
\theta }{1-\theta }\right) ^{1/\alpha }.
\end{equation*}

Observe now that relations (\ref{MRCA_limit}) and (\ref{Laplace_size}) are still valid if $m\sim \theta n$ for some $%
\theta \in (0,1)$. Using these relations we conclude that to prove point 2)
of Theorem \ref{T_Population_size} it is sufficient to find, for each $%
\lambda >0$ the limit of the product
\begin{eqnarray}
&&\mathbf{E}\left[ Z(m)e^{-(\lambda -T\log f_{n-m}(0))Z(m)/T}\right] \frac{%
\mathbf{P}\left( \mathcal{H}(n-m,\varphi (n))\right) }{\mathbf{P}\left(
\mathcal{H}(n,\varphi (n))\right) }  \notag \\
&&\quad \sim \mathbf{E}\left[ Z(m)e^{-\left( \lambda +\left( \frac{\theta }{%
1-\theta }\right) ^{1/\alpha }\right) Z(m)(1-f_{m}(0))}\right] \frac{\mathbf{%
P}\left( \mathcal{H}(n-m,\varphi (n))\right) }{\mathbf{P}\left( \mathcal{H}%
(n,\varphi (n))\right) }  \label{Prel2}
\end{eqnarray}%
as $m\sim \theta n\rightarrow \infty $. Since
\begin{equation*}
\frac{\mathbf{P}\left( \mathcal{H}(n-m,\varphi (n))\right) }{\mathbf{P}%
\left( \mathcal{H}(n,\varphi (n))\right) }\sim \frac{1-f_{n-m}(0)}{n-m}\frac{%
n}{1-f_{n}(0)}\sim \left( \frac{1}{1-\theta }\right) ^{1/\alpha +1}
\end{equation*}%
in view of Theorem \ref{T_deviation} and (\ref{SurvivalProbab}), we deduce
by the last two lines in (\ref{Asym1}) and relation~(\ref{Prel2}) that%
\begin{eqnarray*}
&&\lim_{m\sim \theta n\rightarrow \infty }\mathbf{E}\left[ e^{-\lambda
(1-f_{m}(0))Z(m)}|\mathcal{H}(n,\varphi (n))\right] \\
&&\qquad=\lim_{m\sim \theta n\rightarrow \infty }\mathbf{E}\left[
Z(m)e^{-(\lambda -T\log f_{n-m}(0))Z(m)/T}\right] \frac{\mathbf{P}\left(
\mathcal{H}(n-m,\varphi (n))\right) }{\mathbf{P}\left( \mathcal{H}(n,\varphi
(n))\right) } \\
&&\qquad \qquad =\frac{1}{\left( 1+\left( \lambda +\left( \frac{\theta }{%
1-\theta }\right) ^{1/\alpha }\right) ^{\alpha }\right) ^{1/\alpha +1}}%
\left( \frac{1}{1-\theta }\right) ^{1/\alpha +1} \\
&&\qquad \qquad =\frac{1}{\left( 1-\theta +\left( \lambda \left( 1-\theta
\right) ^{1/\alpha }+\theta ^{1/\alpha }\right) ^{\alpha }\right) ^{1/\alpha
+1}}
\end{eqnarray*}

This proves point 2) of Theorem \ref{T_Population_size}.

\subsection{The distribution of $Z(m)$ given $m=n-\protect\psi (n)$ and $%
n\gg \protect\psi (n)\gg \protect\varphi (n)$}

We start this section by proving the following statement.

\begin{lemma}
\label{L_represent}Let condition (\ref{MainAssump}) be valid and $\rho >0$
be a fixed number. If $l\rightarrow \infty $ then the parameter $r$
satisfying the inequalities%
\begin{equation}
1-f_{r+1}(0)\leq 1-f_{l}^{\rho }(0)<1-f_{r}(0)  \label{R_inequality}
\end{equation}%
has the asymptotic representation%
\begin{equation}
r\sim l\rho ^{-\alpha },\quad l\rightarrow \infty .  \label{Assym_r}
\end{equation}
\end{lemma}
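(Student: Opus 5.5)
The plan is to compare the two tails through the asymptotic relation (\ref{SimpeForm}) together with the regular variation of $x\mapsto x^{\alpha}L(x)$ at zero. First I would note that $1-f_l^{\rho}(0)\sim\rho(1-f_l(0))$ as $l\to\infty$. Indeed, $f_l(0)\to1$, so $-\log f_l(0)\sim 1-f_l(0)\to0$. Hence $1-f_l^{\rho}(0)=1-e^{\rho\log f_l(0)}\sim-\rho\log f_l(0)\sim\rho(1-f_l(0))$. Since $1-f_k(0)$ decreases strictly to $0$ as $k$ grows and $1-f_l^{\rho}(0)\in(0,1)$, the integer $r=r(l)$ in (\ref{R_inequality}) is well defined and $r\to\infty$ as $l\to\infty$.

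Next I would show that $1-f_{r}(0)\sim1-f_{r+1}(0)$. By (\ref{MainAssump}),
\begin{equation*}
\frac{f_{r+1}(0)-f_r(0)}{1-f_r(0)}=(1-f_r(0))^{\alpha}L(1-f_r(0))\to0 .
\end{equation*}
Together with (\ref{R_inequality}), this gives
\begin{equation*}
1-f_r(0)\sim1-f_l^{\rho}(0)\sim\rho\,(1-f_l(0)).
\end{equation*}

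Now I would set $g(x):=x^{\alpha}L(x)$, which is regularly varying with index $\alpha$ as $x\downarrow0$. By (\ref{SimpeForm}), $g(1-f_r(0))\sim 1/(\alpha r)$ and $g(1-f_l(0))\sim1/(\alpha l)$. On the other hand, $1-f_r(0)\sim\rho(1-f_l(0))$ with both arguments tending to zero. By the uniform convergence theorem for slowly varying functions, $L(1-f_r(0))/L(1-f_l(0))\to1$, and therefore $g(1-f_r(0))\sim\rho^{\alpha}g(1-f_l(0))$. Comparing the two expressions gives $1/r\sim\rho^{\alpha}/l$, which is (\ref{Assym_r}).

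The only delicate point is the step $L(1-f_r(0))\sim L(1-f_l(0))$. Here the ratio of the arguments converges to $\rho$ but is not exactly $\rho$, so I would invoke the uniform convergence theorem on a compact interval such as $[\rho/2,2\rho]$ rather than the bare definition of slow variation. The remaining steps are routine first-order expansions.
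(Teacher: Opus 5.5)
Your proof is correct. It shares the paper's skeleton: show $1-f_l^{\rho}(0)\sim\rho(1-f_l(0))$, note $r\to\infty$ and $(1-f_{r+1}(0))/(1-f_r(0))\to1$, and deduce $1-f_r(0)\sim\rho(1-f_l(0))$. For the ratio step the paper uses $f'(1)=\mathbf{E}\xi=1$ and you use the explicit form of $f(s)-s$; this is the same fact.

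You differ from the paper in how you convert this into an asymptotic for $r$. The paper works on the time scale. It uses (\ref{SurvivalProbab}), i.e.\ regular variation of $Q(n)=1-f_n(0)$ in $n$ with index $-1/\alpha$, to write $\rho(1-f_l(0))\sim 1-f_{l\rho^{-\alpha}}(0)$, hence $Q(r)\sim Q(l\rho^{-\alpha})$. It then ``uses (\ref{SurvivalProbab}) once again''. That step tacitly requires inverting a regularly varying function with nonzero index, e.g.\ via monotonicity of $Q$.

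You work on the space scale instead. You apply (\ref{SimpeForm}) at both $r$ and $l$, together with regular variation of $x^{\alpha}L(x)$ at zero, and get $l/r\to\rho^{\alpha}$ directly, with no inversion. This is slightly more self-contained, since (\ref{SurvivalProbab}) is itself derived from (\ref{SimpeForm}) by exactly the inversion you avoid.

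Your price is the uniform convergence theorem for $L$, which you correctly flag and which is valid here. You could even drop it. Note that $x^{\alpha}L(x)=1-(1-f(1-x))/x$ is nondecreasing in $x$ by convexity of $f$. Sandwiching $1-f_r(0)$ between $(\rho\pm\epsilon)(1-f_l(0))$ then reduces everything to the bare definition of slow variation.
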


\textbf{Proof}. In view of (\ref{SurvivalProbab}) and properties of slowly
varying functions
$
1-f_{l}^{\rho }(0)\sim \rho \left( 1-f_{l}(0)\right) \sim 1-f_{l\rho
^{-\alpha }}(0)
$
as $l\rightarrow \infty $. Hence it follows that if $r$ satisfies (\ref%
{R_inequality}) then $r\rightarrow \infty $. Besides,
\begin{equation*}
\lim_{r\rightarrow \infty }\frac{1-f_{r+1}(0)}{1-f_{r}(0)}%
=\lim_{r\rightarrow \infty }\frac{1-f(f_{r}(0))}{1-f_{r}(0)}=\mathbf{E}\xi
=1.
\end{equation*}%
Therefore,
\begin{equation*}
1-f_{l\rho ^{-\alpha }}(0)\sim 1-f_{r}(0)
\end{equation*}%
as $l\rightarrow \infty $. Using now (\ref{SurvivalProbab}) once again we
get (\ref{Assym_r}).

Lemma \ref{L_represent} is proved.

Our aim is to show that $m=n-\psi (n)$ and $n\gg \psi (n)\gg \varphi
(n)\rightarrow \infty $ then, for any $\lambda >0$%
\begin{equation}
\lim_{n\gg \psi (n)\gg \varphi (n)\rightarrow \infty }\mathbf{E}\left[
e^{-\lambda (1-f_{n-m}(0))Z(m)}|\mathcal{H}(n,\varphi (n))\right] =\frac{1}{%
\left( 1+\lambda \right) ^{\alpha +1}}.  \label{Intermed}
\end{equation}

Note that by Theorem \ref{T_deviation}%
\begin{equation*}
\mathbf{P}\left( \mathcal{H}(l,r)\right) \sim \frac{1-f_{l}(0)}{\alpha l}%
\frac{r}{\Gamma \left( 1+\alpha \right) }
\end{equation*}%
if $l\gg r\rightarrow \infty $. Besides, given $n\gg \varphi (n)\rightarrow
\infty $ the representation (\ref{Laplace_size}) is valid. We now set
\begin{equation}
T=-\frac{1}{\log f_{n-m}(0)}\sim \frac{1}{1-f_{n-m}(0)}  \label{Assym_T}
\end{equation}%
and write, for any complex $\lambda $ with $Re\lambda >0$ the equality
\begin{equation*}
\mathbf{E}\left[ Z(m)e^{-\lambda Z(m)/T}f_{n-m}^{Z(m)-1}(0)\right] =\frac{1}{%
f_{n-m}(0)}\mathbf{E}\left[ Z(m)\left( f_{n-m}^{1+\lambda }(0)\right) ^{Z(m)}%
\right] .
\end{equation*}

To investigate the asymptotic behavior of the expectation at the right-hand
side of the equality we first consider the function%
\begin{equation*}
\mathbf{E}\left[ e^{\left( 1+\lambda \right) Z(m)\log f_{n-m}(0)}\right]
=f_{m}\left( f_{n-m}^{1+\lambda }(0)\right)
\end{equation*}%
and find, for $\lambda >0$ the parameter $r=r(\lambda )$ such that%
\begin{equation}
1-f_{r+1}(0)\leq 1-f_{n-m}^{1+\lambda }(0)<1-f_{r}(0).  \label{DEf_r}
\end{equation}%
According to Lemma \ref{L_represent}%
\begin{equation}
r\sim \frac{n-m}{\left( 1+\lambda \right) ^{\alpha }}  \label{Asympt_r}
\end{equation}%
as $n-m\rightarrow \infty $. Using (\ref{DEf_r}) and properties of
iterations we conclude that, for $\lambda >0$%
\begin{equation*}
f_{m+r}(0)-f_{m}(0)\leq \mathbf{E}\left[ e^{\left( 1+\lambda \right)
Z(m)\log f_{n-m}(0)}\right] -f_{m}(0)\leq f_{m+r+1}(0)-f_{m}(0).
\end{equation*}%
Set
\begin{equation*}
\Delta \left( k\right) =\frac{1-f_{k}(0)}{\alpha k},\;k=1,2,....
\end{equation*}%
In view of (\ref{SurvivalProbab}) and properties of slowly varying functions
\begin{equation*}
\frac{\Delta (m+k+1)}{\Delta (n)}\rightarrow 1
\end{equation*}%
as $n,m\rightarrow \infty $ uniformly in $k\leq n-m=\psi (n)=o(n)$. Besides,
\begin{eqnarray*}
\lim_{k\rightarrow \infty }\frac{f_{k+1}(0)-f_{k}(0)}{\Delta \left(
k+1\right) } &=&\lim_{k\rightarrow \infty }\frac{f\left( f_{k}(0)\right)
-f_{k}(0)}{\Delta \left( k+1\right) } \\
&=&\lim_{k\rightarrow \infty }\frac{\alpha k\left( 1-f_{k}(0)\right)
^{1+\alpha }L(1-f_{k}(0))}{1-f_{k}(0)} \\
&=&\lim_{k\rightarrow \infty }\alpha k\left( 1-f_{k}(0)\right) ^{\alpha
}L(1-f_{k}(0))=1
\end{eqnarray*}%
in view of (\ref{MainAssump}) and (\ref{SimpeForm}). Hence, using (\ref%
{Asympt_r}) we get
\begin{eqnarray*}
&&\lim_{n\rightarrow \infty }\frac{1}{n-m}\left[ \frac{%
f_{m}(f_{r}(0))-f_{m}(0)}{\Delta \left( n\right) }\right] \\
&&\qquad \qquad =\lim_{n\rightarrow \infty }\frac{1}{n-m}\sum_{k=0}^{r-1}%
\frac{f_{m}(f_{k+1}(0))-f_{m}(f_{k}(0))}{\Delta \left( n\right) } \\
&&\qquad \qquad =\lim_{n\rightarrow \infty }\frac{1}{n-m}\sum_{k=0}^{r-1}%
\frac{\Delta \left( m+k+1\right) }{\Delta \left( n\right) }\left[ \frac{%
f_{m+k+1}(0)-f_{m+k}(0)}{\Delta \left( m+k+1\right) }\right] \\
&&\qquad \qquad =\lim_{n\rightarrow \infty }\frac{1}{n-m}\sum_{k=0}^{r-1}1=%
\frac{1}{(1+\lambda )^{\alpha }}.
\end{eqnarray*}

Thus,
\begin{equation}
\lim_{n\rightarrow \infty }\frac{1}{n-m}\frac{\mathbf{E}\left[ e^{\left(
1+\lambda \right) Z(m)\log f_{n-m}(0)}\right] -f_{m}(0)}{\Delta \left(
n\right) }=\frac{1}{(1+\lambda )^{\alpha }}  \label{Yagl2}
\end{equation}%
for any $\lambda >0$. Now we apply the same arguments as before. Since the
prelimiting functions at the left-hand side of (\ref{Yagl2}) are analytic
functions in the complex domain $Re\lambda >0$ and the function%
\begin{equation*}
G(\lambda ):=\frac{1}{(1+\lambda )^{\alpha }},\ \lambda >0,
\end{equation*}%
admits an analytic continuation to the same domain if we take the branch
with $G(1)=\left( 1/2\right) ^{\alpha }>0$, it follows from the uniqueness
theorem for analytic functions that (\ref{Yagl2}) holds for this branch in the domain $%
Re\lambda >0$. This, in turn, implies that the first derivatives of the
preliminary functions with respect to $\lambda $ converge, as $n\rightarrow
\infty $ to the first derivative of the selected branch for $Re\lambda >0.$ Therefore,%
\begin{eqnarray*}
&&\lim_{n\rightarrow \infty }\frac{\partial }{\partial \lambda }\frac{1}{n-m}%
\frac{\mathbf{E}\left[ e^{\left( 1+\lambda \right) Z(m)\log f_{n-m}(0)}%
\right] -f_{m}(0)}{\Delta \left( n\right) } \\
&&\qquad \qquad =\lim_{n\rightarrow \infty }\frac{\partial }{\partial
\lambda }\frac{1}{n-m}\left[ \frac{f_{m}(f_{r}(0))-f_{m}(0)}{\Delta \left(
n\right) }\right] \\
&&\qquad \qquad =\lim_{n\rightarrow \infty }\frac{1}{(n-m)\Delta \left(
n\right) }\frac{\partial }{\partial \lambda }f_{m}\left( e^{\left( 1+\lambda
\right) \log f_{n-m}(0)}\right) \\
&&\qquad \qquad =-\lim_{n\rightarrow \infty }\frac{1}{(n-m)\Delta \left(
n\right) T}\mathbf{E}\left[ Z(m)e^{-\left( 1+\lambda \right) Z(m)/T}\right]
=-\frac{\alpha }{\left( 1+\lambda \right) ^{\alpha +1}}.
\end{eqnarray*}%
Thus,%
\begin{equation*}
\mathbf{E}\left[ Z(m)e^{-\left( 1+\lambda \right) Z(m)/T}\right] \sim \frac{%
\alpha }{\left( 1+\lambda \right) ^{\alpha +1}}\frac{(n-m)\Delta \left(
n\right) T}{1}.
\end{equation*}

\bigskip \bigskip Now%
\begin{eqnarray*}
&&\mathbf{E}\left[ Z(m)e^{-\left( 1+\lambda \right) Z(m)/T}\right] \frac{%
1-f_{n-m}(0)}{n-m}\frac{n}{1-f_{n}(0)} \\
&&\qquad \qquad \sim \frac{\alpha }{\left( 1+\lambda \right) ^{\alpha +1}}%
\frac{1-f_{n-m}(0)}{n-m}\frac{n}{1-f_{n}(0)}\frac{T(n-m)\Delta \left(
n\right) }{1} \\
&&\qquad\qquad \qquad =\frac{1}{\left( 1+\lambda \right) ^{\alpha +1}}.
\end{eqnarray*}%
Hence, using (\ref{Laplace_size}) with $T^{-1}=-\log f_{n-m}(0)$ and (\ref%
{Assym_T}) we conclude that%
\begin{equation*}
\lim_{n\rightarrow \infty }\mathbf{E}\left[ e^{-\lambda (1-f_{n-m}(0))Z(m)}|%
\mathcal{H}(n,\varphi (n))\right] =\frac{1}{\left( 1+\lambda \right)
^{\alpha +1}},\;\lambda >0,
\end{equation*}%
as needed.

\subsection{The distribution of $Z(m)$ for $n-m\sim y\protect\varphi (n)$, $%
y>0$}

In this section we consider the case when the gap between $n$ and $m$ is of
order $\varphi (n)$ and consider $y\varphi (n)$ as $\left[ y\varphi (n)%
\right] $. We denote by $Z_{1}^{\ast }(l),\ldots ,Z_{j}^{\ast }(l),$ $j,l\in
\mathbb{N},$ independent random variables, each of which is distributed as $%
\left\{ Z(l)|Z(l)>0\right\} ,$ set
\begin{equation*}
\hat{Z}_{j}^{\ast }(l)=\sum_{r=1}^{j}Z_{r}^{\ast }(l),
\end{equation*}%
and introduce independent random variables $\eta _{1},\ldots ,\eta _{j}$
each of which has distribution $M(x)$ specified by (\ref{Yaglom000}).

For $Re \lambda >0$ and $T>0$ we have
\begin{eqnarray*}
&&\mathbf{E}\left[ e^{-\lambda Z(m)/T};\mathcal{H}(n,\varphi (n))\right] \\
&&\quad=\sum_{j=1}^{\infty }\mathbf{E}\left[ e^{-\lambda Z(m)/T};Z(m,n)=j,%
\mathcal{H}(n,\varphi (n))\right] \\
&&\quad=\sum_{j=1}^{\infty }\mathbf{E}\left[ e^{-\lambda Z(m)/T};Z(m,n)=j%
\right] \mathbf{P}\left( \mathcal{H}(n,\varphi (n))|Z(m,n)=j\right) \\
&&\quad=\sum_{j=1}^{\infty }\mathbf{E}\left[ e^{-\lambda Z(m)/T};Z(m,n)=j%
\right] \mathbf{P}\left( \left( 1-f_{\varphi (n)}(0)\right) \hat{Z}%
_{j}^{\ast }(y\varphi (n))<1\right) .
\end{eqnarray*}

It is not difficult to check, using (\ref{Yaglom000}) and (\ref%
{SurvivalProbab}) that, for any $j\geq 1$
\begin{eqnarray*}
&&\lim_{n\rightarrow \infty }\mathbf{P}\left( \mathcal{H}(n,\varphi
(n))|Z(m,n)=j\right) =\lim_{n\rightarrow \infty }\mathbf{P}\left(
(1-f_{\varphi (n)}(0))\hat{Z}_{j}^{\ast }(y\varphi (n))\leq 1\right) \\
&&\qquad\qquad =\lim_{n\rightarrow \infty }\mathbf{P}\left(
\sum_{r=1}^{j}\left( 1-f_{y\varphi (n)}(0)\right) Z_{r}^{\ast }(y\varphi
(n))\leq \frac{1-f_{y\varphi (n)}(0)}{1-f_{\varphi (n)}(0)}\right) \\
&&\qquad\qquad\qquad =\mathbf{P}\left( \eta _{1}+\cdots +\eta _{j}\leq
y^{-1/\alpha }\right) =M^{\ast j}(y^{-1/\alpha }).
\end{eqnarray*}

Taking now $T^{-1}=-\log f_{y\varphi (n)}(0)$ we evaluate, for each $j$ the
quantity
\begin{eqnarray*}
&&\mathbf{E}\left[ e^{-\lambda Z(m)/T};Z(m,n)=j\right] \\
&&\qquad =\sum_{k=j}^{\infty }e^{-\lambda k/T}\mathbf{P}\left( Z(m)=k\right)
\binom{k}{j}f_{y\varphi (n)}^{k-j}(0)\left( 1-f_{y\varphi (n)}(0)\right) ^{j}
\\
&&\qquad =\frac{\left( 1-f_{y\varphi (n)}(0)\right) ^{j}}{j!}e^{-\lambda
j/T}\sum_{k=j}^{\infty }e^{-\lambda \left( k-j\right) /T}\mathbf{P}\left(
Z(m)=k\right) k^{[j]}f_{y\varphi (n)}^{k-j}(0) \\
&&\qquad =\frac{\left( 1-f_{y\varphi (n)}(0)\right) ^{j}}{j!}e^{-\lambda
j/T}\sum_{k=j}^{\infty }\mathbf{P}\left( Z(m)=k\right) k^{[j]}\left(
f_{y\varphi (n)}^{1+\lambda }(0)\right) ^{k-j}.
\end{eqnarray*}%
Clearly,%
\begin{equation*}
\sum_{k=j}^{\infty }k^{\left[ j\right] }\mathbf{P}\left( Z(m)=k\right)
\left( f_{y\varphi (n)}^{1+\lambda }(0)\right) ^{k-j}=\frac{d^{j}f_{m}(s)}{%
ds^{j}}\left\vert _{s=f_{y\varphi (n)}^{1+\lambda }(0)}\right.
=f_{m}^{(j)}\left( f_{y\varphi (n)}^{1+\lambda }(0)\right) .
\end{equation*}%
According to (\ref{Yagl2}) if\textbf{\ }$n-m=y\varphi (n),y>0,$ and $\varphi
(n)=o(n)$ as $n\rightarrow \infty $ then\textbf{\ }%
\begin{equation}
\lim_{n\rightarrow \infty }\frac{f_{m}(e^{\left( 1+\lambda \right) \log
f_{y\varphi (n)}(0)})-f_{m}(0)}{y\varphi (n)\Delta \left( n\right) }=\frac{1%
}{\left( 1+\lambda \right) ^{\alpha }},\quad \lambda >0.  \label{Complex1}
\end{equation}%
Since the prelimiting functions at the left-hand side of (\ref{Complex1})
have an analytic continuation in the domain $Re\lambda >-1$ and the same is
true for the function $\left( 1+\lambda \right) ^{-\alpha }$ with the
agreement that $1^{-\alpha }=1$, the derivatives of any order with respect
to $\lambda $ of the prelimiting functions converge, as $n\rightarrow \infty
$ to the derivative of the respective order of the limiting function for any
$Re\lambda >-1$. Hence it follows that, for all $\lambda >0$ and each $J\geq
1$
\begin{equation}
\lim_{n\rightarrow \infty }\frac{1}{y\varphi (n)\Delta \left( n\right) }%
\frac{\partial ^{J}}{\partial ^{J}\lambda }\left[ f_{m}(e^{\left( 1+\lambda
\right) \log f_{y\varphi (n)}(0)})\right] =\left( -1\right) ^{J}\frac{\Gamma
(\alpha +J)}{\Gamma (\alpha )}\frac{1}{\left( 1+\lambda \right) ^{\alpha +J}}%
.  \label{Derivative2}
\end{equation}

In particular,
\begin{eqnarray}
&&\frac{1}{y\varphi (n)\Delta \left( n\right) }\frac{d}{d\lambda }\left[
f_{m}(e^{\left( 1+\lambda \right) \log f_{y\varphi (n)}(0)})\right]  \notag
\\
&&\qquad\qquad\qquad\qquad=\frac{\log f_{y\varphi (n)}(0)}{y\varphi
(n)\Delta \left( n\right) }f_{m}^{\prime }(f_{y\varphi (n)}^{1+\lambda
}(0))f_{y\varphi (n)}^{1+\lambda }(0)  \notag \\
&&\qquad\qquad\qquad\qquad \sim \frac{\log f_{y\varphi (n)}(0)}{y\varphi
(n)\Delta \left( n\right) }f_{m}^{\prime }(f_{y\varphi (n)}^{1+\lambda }(0))
\notag \\
&&\qquad\qquad\qquad\qquad \sim \frac{\left( -1\right) \alpha }{\left(
1+\lambda \right) ^{\alpha +1}}=\frac{\left( -1\right) }{\left( 1+\lambda
\right) ^{\alpha +1}}\frac{\Gamma (\alpha +1)}{\Gamma (\alpha )}.
\label{FirstStep}
\end{eqnarray}%
as $n\rightarrow \infty $.

Now we show that, for any $k\geq 1$
\begin{equation}
\frac{\left( \log f_{y\varphi (n)}(0)\right) ^{k}}{y\varphi (n)\Delta \left(
n\right) }f_{m}^{(k)}\left( f_{x\varphi (n)}^{1+\lambda }(0)\right) \sim
\frac{\left( -1\right) ^{k}}{\left( 1+\lambda \right) ^{\alpha +k}}\frac{%
\Gamma (\alpha +k)}{\Gamma (\alpha )}  \label{BasicInduction}
\end{equation}%
as $n\rightarrow \infty $. To this aim we use the Fa\`{a} di Bruno formula
for the derivatives of the composition of functions $F(\cdot )$ and $G(\cdot
)$ in the following form:
\begin{equation*}
\frac{d^{J}}{dz^{J}}\left[ F(G(z))\right] =%
\sum_{k=1}^{J}F^{(k)}(G(z))B_{J,k}\left( G^{\prime }(z),G^{\prime \prime
}(z),\ldots ,G^{(J-k+1)}(z)\right) ,
\end{equation*}%
where, for $1\leq k\leq J$
\begin{equation*}
B_{J,k}(x_{1},\ldots ,x_{J-k+1}):=\sum_{\Pi \left( J,k\right) }\frac{J!}{%
i_{1}!\cdots i_{J-k+1}!}\prod_{r=1}^{J-k+1}\left( \frac{x_{r}}{r!}\right)
^{i_{r}}  \label{DefB_Jk}
\end{equation*}%
are the so-called Bell polynomials of the second kind. These polynomials
possess the following properties:

For any $a$ and $b$%
\begin{equation*}
B_{J,k}(ab,ab^{2},\ldots ,ab^{J-k+1})=a^{k}b^{J}B_{J,k}(1,1,\ldots ,1)
\end{equation*}
and
\begin{equation}
B_{J,k}(1,1,\ldots ,1)=S(J,k)=\frac{1}{k!}\sum_{r=1}^{k}\left( -1\right)
^{k-r}\binom{k}{r}r^{J},  \label{StirlingNumbers}
\end{equation}%
where $S(J,k),\ k=1,2,\ldots ,J$ denote the Stirling numbers of the second
kind. 

Setting $F(z):=f_{m}(z)$ and $G(\lambda ):=f_{y\varphi (n)}^{1+\lambda }(0)$
and observing that%
\begin{equation*}
\frac{d^{r}G(\lambda )}{d^{r}\lambda }=f_{y\varphi (n)}^{1+\lambda }(0)\log
^{r}f_{y\varphi (n)}(0),r=1,2,...,
\end{equation*}%
and, therefore,%
\begin{eqnarray*}
&&B_{J,k}\left( G^{\prime }(z),G^{\prime \prime }(z),\ldots
,G^{(J-k+1)}(z)\right) \\
&=&\left( f_{y\varphi (n)}^{1+\lambda }(0)\right) ^{k}B_{J,k}\left( \log
f_{y\varphi (n)}(0),\log ^{2}f_{y\varphi (n)}(0),\ldots ,\left( \log
f_{y\varphi (n)}(0)\right) ^{J-k+1}\right) \\
&=&\left( f_{y\varphi (n)}^{1+\lambda }(0)\right) ^{k}\left( \log
f_{y\varphi (n)}(0)\right) ^{J}B_{J,k}\left( 1,1,\ldots ,1\right) ,
\end{eqnarray*}%
we conclude that, for any fixed $y>0$
\begin{equation*}
\frac{d^{J}}{d^{J}\lambda }\left[ f_{m}(e^{(1+\lambda )\log f_{y\varphi
(n)}(0)})\right] =\left( \log f_{y\varphi (n)}(0)\right)
^{J}\sum_{k=1}^{J}f_{m}^{(k)}(f_{y\varphi (n)}^{1+\lambda }(0))\left(
f_{y\varphi (n)}^{1+\lambda }(0)\right) ^{k}S(J,k).
\end{equation*}

Now we prove (\ref{BasicInduction}) by induction. By (\ref{FirstStep}) this
equivalence is true for $k=1,$ \ and if it is true for all $k<J$ then, using
(\ref{Derivative2}) and convergence of $f_{y\varphi (n)}(0)$ to 1 and $\log
f_{y\varphi (n)}(0)$ to 0 as $n\rightarrow \infty,$ we conclude that, for
any $y>0$%
\begin{eqnarray*}
&&\frac{1}{y\varphi (n)\Delta \left( n\right) }\frac{d^{J}}{d^{J}\lambda }%
\left[ f_{m}(e^{(1+\lambda )\log f_{y\varphi (n)}(0)})\right] =\frac{\left(
\log f_{y\varphi (n)}(0)\right) ^{J}f_{m}^{(J)}(f_{y\varphi (n)}^{1+\lambda
}(0))}{y\varphi (n)\Delta \left( n\right) } \\
&&\qquad \qquad \qquad \qquad \qquad \quad \quad +\sum_{k=1}^{J-1}\frac{%
\left( \log f_{y\varphi (n)}(0)\right) ^{J}f_{m}^{(k)}(f_{y\varphi
(n)}^{1+\lambda }(0))}{y\varphi (n)\Delta \left( n\right) }S(J,k) \\
&&\qquad \qquad \qquad \qquad =\frac{\left( \log f_{y\varphi (n)}(0)\right)
^{J}f_{m}^{(J)}(f_{y\varphi (n)}^{1+\lambda }(0))}{y\varphi (n)\Delta \left(
n\right) }+O\left( \log f_{y\varphi (n)}(0)\right) \\
&&\qquad \qquad \qquad \qquad \sim \frac{\left( -1\right) ^{J}}{\left(
1+\lambda \right) ^{\alpha +J}}\frac{\Gamma (\alpha +J)}{\Gamma (\alpha )}
\end{eqnarray*}%
as $n\rightarrow \infty $. These estimates conclude the step of induction
and prove (\ref{BasicInduction}).

Now we have, for any fixed $y>0$ and $j\in\mathbb{N}$%
\begin{eqnarray*}
&&\mathbf{E}\left[ e^{-\lambda Z(m)/T};Z(m,n)=j\right] \\
&&\qquad \qquad =\frac{\left( 1-f_{y\varphi (n)}(0)\right) ^{j}}{j!}\frac{1}{%
f_{y\varphi (n)}^{\lambda j}(0)^{j}\left( \log f_{y\varphi (n)}(0)\right)
^{j}}\frac{d^{j}}{d^{j}\lambda }f_{m}\left( f_{y\varphi (n)}^{1+\lambda
}(0)\right) \\
&&\qquad \qquad \sim \frac{\left( 1-f_{y\varphi (n)}(0)\right) ^{j}}{%
j!f_{y\varphi (n)}^{\lambda j}(0)}\left( -1\right) ^{j}\frac{y\varphi
(n)\Delta (n)}{\left( \log f_{y\varphi (n)}(0)\right) ^{j}}\frac{\Gamma
\left( j+\alpha \right) }{\Gamma \left( \alpha \right) }\frac{1}{\left(
1+\lambda \right) ^{\alpha +j}} \\
&&\qquad \qquad \sim \frac{y\varphi (n)\Delta (n)}{j!}\frac{\Gamma \left(
j+\alpha \right) }{\Gamma \left( \alpha \right) }\frac{1}{\left( 1+\lambda
\right) ^{\alpha +j}}
\end{eqnarray*}%
as $n\rightarrow \infty $. Since
\begin{equation*}
\mathbf{P}\left( \mathcal{H}(n,\varphi (n))\right) \sim \frac{1-f_{n}(0)}{%
\alpha n}\frac{1}{\Gamma \left( 1+\alpha \right) }\varphi (n)=\Delta (n)%
\frac{\varphi (n)}{\Gamma \left( 1+\alpha \right) }
\end{equation*}%
as $n\rightarrow \infty $, we see that if $\varphi (n)=o(n),n-m=y\varphi
(n),y>0$, then for each fixed $j=1,2,...$
\begin{eqnarray*}
&&\lim_{\varphi (n)\rightarrow \infty }\mathbf{E}\left[ e^{-\lambda
Z(m)/T};Z(m,n)=j|\mathcal{H}(n,\varphi (n))\right] \\
&&\qquad=\lim_{\varphi (n)\rightarrow \infty }\mathbf{E}\left[ e^{-\lambda
Z(m)/T};Z(m,n)=j\right] \frac{\mathbf{P}\left( \mathcal{H}(n,\varphi
(n))|Z(m,n)=j\right) }{\mathbf{P}\left( \mathcal{H}(n,\varphi (n))\right) }
\\
&&\qquad \qquad =\frac{1}{\left( 1+\lambda \right) ^{\alpha +j}}\frac{\Gamma
\left( 1+\alpha \right) \Gamma \left( j+\alpha \right) }{j!\Gamma \left(
\alpha \right) }yM^{\ast j}(y^{-1/\alpha }) \\
&&\qquad \qquad =\frac{\alpha \Gamma \left( j+\alpha \right) }{j!\left(
1+\lambda \right) ^{\alpha +j}}yM^{\ast j}(y^{-1/\alpha })
\end{eqnarray*}%
Further, using Lemma \ref{L_proper} we get
\begin{eqnarray*}
&&\lim_{J\rightarrow \infty }\limsup_{\varphi (n)\rightarrow \infty }\mathbf{%
E}\left[ e^{-\lambda Z(m)/T};Z(m,n)>J|\mathcal{H}(n,\varphi (n))\right] \\
&&\qquad\leq\lim_{J\rightarrow \infty }\limsup_{\varphi (n)\rightarrow
\infty }\mathbf{P}\left( Z(m,n)>J|\mathcal{H}(n,\varphi (n))\right) \\
&&\qquad\qquad=1-\lim_{J\rightarrow \infty }\lim_{\varphi (n)\rightarrow
\infty }\mathbf{P}\left( Z(m,n)\leq J|\mathcal{H}(n,\varphi (n))\right) \\
&&\qquad\qquad=\lim_{J\rightarrow \infty }\sum_{j=J+1}^{\infty }\frac{\alpha
\Gamma \left( j+\alpha \right) }{j!}yM^{\ast j}(y^{-1/\alpha })=0.
\end{eqnarray*}%
Hence we conclude that
\begin{equation*}
\lim_{\varphi (n)\rightarrow \infty }\mathbf{E}\left[ e^{-\lambda Z(m)/T}|%
\mathcal{H}(n,\varphi (n))\right] =\sum_{j=1}^{\infty }\frac{\alpha \Gamma
\left( j+\alpha \right) }{j!\left( 1+\lambda \right) ^{\alpha +j}}yM^{\ast
j}(y^{-1/\alpha }),
\end{equation*}%
as needed.

\subsection{The distribution of $Z(m)$ if $m=n-\chi(n)$ where $n\gg
\varphi(n)\gg\chi (n)\geq 0$}

We need to show that if $m=n-\chi (n)$, where $n\gg \varphi (n)\gg \chi
(n)\geq 0$, then%
\begin{equation*}
\lim_{m\rightarrow \infty }\mathbf{E}\left[ e^{-\lambda \left( 1-f_{\varphi
(n)}(0)\right) Z(m)}|\mathcal{H}(n,\varphi (n))\right] =\alpha
\int_{0}^{1}x^{\alpha -1}e^{-\lambda x}dx.
\end{equation*}%
First observe that if
\begin{equation*}
T=1/(1-f_{\varphi (n)}(0))=o\left( 1/\left( 1-f_{n}(0)\right) \right)
\end{equation*}%
then we may use estimate (\ref{ModifDevoation}) and properties of slowly
varying functions to conclude that, for any $x\in (0,1]$
\begin{eqnarray}
&&\lim_{n\rightarrow \infty }\mathbf{P}\left( 0<Z(n)<xT\,|\,0<Z(n)<T\right)
=\lim_{n\rightarrow \infty }\frac{\mathbf{P}\left( 0<Z(n)<xT\right) }{%
\mathbf{P}\left( 0<Z(n)<T\right) }  \notag \\
&&\qquad \qquad \qquad \qquad \qquad \qquad \qquad \quad =\lim_{n\rightarrow
\infty }\frac{\left( xT\right) ^{\alpha }}{L\left( 1/xT\right) }\frac{%
L\left( 1/T\right) }{T^{\alpha }}=x^{\alpha }  \label{Distrib}
\end{eqnarray}%
and, therefore,%
\begin{equation}
\lim_{n\rightarrow \infty }\mathbf{E}\left[ e^{-\lambda Z(n)/T}|0<Z(n)<T%
\right] =\alpha \int_{0}^{1}x^{\alpha -1}e^{-\lambda x}dx.  \label{Laplace0}
\end{equation}

Now we consider the case $m=n-\chi (n)$ where $n\gg \varphi (n)\gg \chi
(n)\geq 1$. We fix $\varepsilon \in (0,1)$ and write the decomposition%
\begin{eqnarray*}
&&\mathbf{E}\left[ e^{-\lambda Z(m)/T};0<Z(n)<T\right] =\mathbf{E}\left[
e^{-\lambda Z(m)/T};0<Z(m)<\varepsilon T,0<Z(n)<T\right] \\
&&\qquad\qquad\qquad\qquad\qquad+\mathbf{E}\left[ e^{-\lambda Z(m)/T};Z(m)>NT,0<Z(n)<T\right] \\
&&\qquad\qquad\qquad\qquad\qquad+\mathbf{E}\left[ e^{-\lambda Z(m)/T};\varepsilon T\leq Z(m)\leq
NT,0<Z(n)<T\right].
\end{eqnarray*}
First we observe that
\begin{eqnarray}
&&\mathbf{E}\left[ e^{-\lambda Z(m)/T};0<Z(m)<\varepsilon T,0<Z(n)<T\right]
\notag \\
&&\qquad \qquad \qquad \leq \mathbf{P}(0<Z(m)<\varepsilon
T)=k_{m,n}(\varepsilon )\mathbf{P}(0<Z(n)<T),  \label{Remain1}
\end{eqnarray}%
where (see (\ref{SurvivalProbab}) and (\ref{ModifDevoation}))
\begin{equation*}
k_{m,n}(\varepsilon )=\frac{\mathbf{P}(0<Z(m)<\varepsilon T)}{\mathbf{P}%
(0<Z(n)<T)}\sim \frac{n}{m}\frac{1-f_{m}(0)}{1-f_{n}(0)}\frac{L(1/T)}{%
L(1/\left( \varepsilon T\right) )}\varepsilon ^{\alpha }\rightarrow
\varepsilon ^{\alpha }
\end{equation*}%
as $m\sim n$ $\rightarrow \infty $. Further, for any $N>1$%
\begin{equation}
\mathbf{E}\left[ e^{-\lambda Z(m)/T};Z(m)>NT,0<Z(n)<T\right] \leq e^{-N}%
\mathbf{P}(0<Z(n)<T).  \label{Remain2}
\end{equation}%
Since the right-hand sides in (\ref{Remain1}) and (\ref{Remain2}) are $%
o\left( \mathbf{P}(0<Z(n)<T)\right) $ as first $T\rightarrow \infty $ and
then $\varepsilon \downarrow 0$ and $N\uparrow \infty $, it remains to
evaluate the term%
\begin{eqnarray*}
&&\mathbf{E}\left[ e^{-\lambda Z(m)/T};\varepsilon T\leq Z(m)\leq NT,0<Z(n)<T%
\right]   \notag \\
&&\qquad \qquad \, =\sum_{j=\varepsilon T}^{NT}e^{-\lambda j/T}\mathbf{P}%
\left( Z(m)=j\right) \mathbf{P}(0<Z(\chi )<T|Z(0)=j)  \label{Remain3}
\end{eqnarray*}%
for sufficiently small $\varepsilon >0$ and sufficiently large $N$.

We note that if condition (\ref{MainAssump}) is valid then $\mathbf{E}%
\xi ^{r}=\mathbf{E}Z^{r}(1)<\infty $ for any $r\in (1,1+\alpha )$. This
estimate implies (see Theorem 6, Ch.2, $\S $ 3 in \cite{Sev74}) that $%
\mathbf{E}Z^{r}(n)<\infty $ for all $n\in \mathbb{N}$. Hence we conclude by
the Markov inequality that%
\begin{equation*}
\mathbf{P}(\left\vert Z(\chi )-j\right\vert >\delta j|Z(0)=j)\leq \frac{%
\mathbf{E}\left[ \left\vert Z(\chi )-j\right\vert ^{r}|Z(0)=j\right] }{%
\left( \delta j\right) ^{r}}<\infty
\end{equation*}%
for any $\delta >0$ and $r\in (1,1+\alpha )$. Clearly,%
\begin{equation*}
Z(\chi)-j=\sum_{k=1}^{j}\varsigma _{k}(\chi ),
\end{equation*}%
where $\varsigma _{k}(\chi ),k=1,2,...,j$ are independent random variables
each of which is distributed as $\varsigma (\chi ):=Z(\chi )-1$ given $Z(0)=1
$. Since $\mathbf{E}\varsigma (\chi )=0$, it follows by von Bahr-Esseen
inequality (see \cite{BahrEss}) that
\begin{eqnarray*}
\mathbf{E}\left[ \left\vert Z(\chi )-1\right\vert ^{r}|Z(0)=j\right]  &\leq
&2j\mathbf{E}\left[ \left\vert Z(\chi )-1\right\vert ^{r}|Z(0)=1\right]  \\
&\leq &2j(1+\mathbf{E}\left[ Z^{r}(\chi )|Z(0)=1\right] ) \\
&\leq &4j\mathbf{E}\left[ Z^{r}(\chi )|Z(0)=1\right] .
\end{eqnarray*}%
It was show in Lemma 11 of \cite{VFW2008}\ that, for any $r\in
(1,1+\alpha)$ there exists $C=C(r)\in \left( 0,\infty \right) $ such
that, for all $\chi \geq 1$%
\begin{equation*}
\mathbf{E}\left[ Z^{r}(\chi )|Z(0)=1\right] \leq \frac{C}{Q^{r-1}(\chi )}.
\end{equation*}%
As a result%
\begin{equation*}
\mathbf{P}(\left\vert Z(\chi )-j\right\vert >\delta j|Z(0)=j)\leq \frac{4C}{%
\delta ^{r}\left( jQ(\chi )\right) ^{r-1}}.
\end{equation*}%
According to (\ref{Slowly})
\begin{equation*}
jQ(\chi )\geq \varepsilon TQ(\chi )=\varepsilon \frac{1-f_{\chi }(0)}{%
1-f_{\varphi (n)}(0)}\rightarrow \infty.
\end{equation*}%
Hence we conclude that%
\begin{equation*}
\mathbf{P}(\left\vert Z(\chi )-j\right\vert >\delta j|Z(0)=j)\rightarrow 0
\label{Uniform}
\end{equation*}%
uniformly in $j\in \left[ \varepsilon T,NT\right] $. As a result,%
\begin{eqnarray*}
&&\sum_{j=\varepsilon T}^{NT}e^{-\lambda j/T}\mathbf{P}\left( Z(m)=j\right)
\mathbf{P}(0<Z(\chi )<T|Z(0)=j) \\
&&\, =\sum_{j=\varepsilon T}^{NT}e^{-\lambda j/T}\mathbf{P}\left(
Z(m)=j\right) \mathbf{P}(0<Z(\chi )<T,\left\vert Z(\chi )-j\right\vert \leq
\delta j|Z(0)=j) \\
&&\qquad \qquad +o\left( \mathbf{P}\left( 0<Z(m)\leq NT\right) \right) .
\end{eqnarray*}
Now%
\begin{equation*}
\mathbf{P}(0<Z(\chi )<T,j-\delta j\leq Z(\chi )\leq j+\delta j|Z(0)=j)=0
\end{equation*}%
if $T<j(1-\delta )$. Thus,%
\begin{eqnarray*}
&&\sum_{j=\varepsilon T}^{NT}e^{-\lambda j/T}\mathbf{P}\left( Z(m)=j\right)
\mathbf{P}(0<Z(\chi )<T,\left\vert Z(\chi )-j\right\vert \leq \delta
j|Z(0)=j) \\
&&\,=\sum_{j=\varepsilon T}^{T/\left( 1-\delta \right)
}e^{-\lambda j/T}\mathbf{P}\left( Z(m)=j\right) \mathbf{P}(0<Z(\chi
)<T,\left\vert Z(\chi )-j\right\vert \leq \delta j|Z(0)=j).
\end{eqnarray*}
Since%
\begin{equation*}
\mathbf{P}(0<Z(\chi )<T, \left\vert Z(\chi )-j\right\vert \leq \delta
j|Z(0)=j)=\mathbf{P}(\left\vert Z(\chi )-j\right\vert \leq \delta j|Z(0)=j)
\end{equation*}%
if $j\leq T/\left( 1+\delta \right) $ and, by (\ref{Uniform})
\begin{equation*}
\mathbf{P}(\left\vert Z(\chi )-j\right\vert \leq \delta j|Z(0)=j)=1-\mathbf{P%
}(\left\vert Z(\chi )-j\right\vert >\delta j|Z(0)=j)\rightarrow 0
\end{equation*}%
uniformly in $j\in \left[ \varepsilon T,NT\right] $, we may write
\begin{eqnarray*}
&&\sum_{j=\varepsilon T}^{T/\left( 1-\delta \right) }e^{-\lambda
j/T}\mathbf{P}\left( Z(m)=j\right) \mathbf{P}(0<Z(\chi )<T,\,\left\vert Z(\chi
)-j\right\vert \leq \delta j|Z(0)=j) \\
&&\qquad \quad =\sum_{j=\varepsilon T}^{T/\left( 1+\delta \right)
}e^{-\lambda j/T}\mathbf{P}\left( Z(m)=j\right) +o\left( \mathbf{P}\left(
\varepsilon T<Z(m)<T/\left( 1+\delta \right) \right) \right) \\
&&\qquad \qquad +O\left( \mathbf{P}\left( T/\left( 1+\delta \right)
<Z(m)<T/\left( 1-\delta \right) \right) \right) .
\end{eqnarray*}
Since $m\sim n$, we conclude by (\ref{ModifDevoation}) that
\begin{equation*}
\mathbf{P}\left( \varepsilon T<Z(m)<T/\left( 1+\delta \right) \right) \leq C%
\mathbf{P}\left( 0<Z(n)<T\right)   \label{Term21}
\end{equation*}%
and
\begin{equation*}
\lim_{m\sim n\rightarrow \infty }\frac{\mathbf{P}\left( T/\left( 1+\delta
\right) <Z(m)<T/\left( 1-\delta \right) \right) }{\mathbf{P}\left(
0<Z(n)<T\right) }=\frac{1}{\left( 1-\delta \right) ^{\alpha }}-\frac{1}{%
\left( 1+\delta \right) ^{\alpha }}.  \label{Term22}
\end{equation*}%
Further,
\begin{eqnarray*}
\sum_{j=\varepsilon T}^{T/\left( 1+\delta \right) }e^{-\lambda j/T}\mathbf{P}%
\left( Z(m)=j\right)  &=&\sum_{j=\varepsilon T}^{T/\left( 1+\delta \right)
}e^{-\lambda j/T}d\mathbf{P}\left( \varepsilon T\leq Z(m)\leq j\right)  \\
&=&\int_{\varepsilon }^{1/\left( 1+\delta \right) }e^{-\lambda x}d\mathbf{P}%
\left( \varepsilon \leq\frac{Z(m)}{T}\leq x\right) .\qquad \qquad
\end{eqnarray*}%
Hence, using (\ref{Distrib}), (\ref{Laplace0}) and recalling that $m\sim
n\rightarrow \infty $ we get%
\begin{eqnarray*}
&&\int_{\varepsilon }^{1/\left( 1+\delta \right) }e^{-\lambda x}d\mathbf{P}%
\left( \varepsilon <\frac{Z(m)}{T}\leq x\right)   \notag \\
&=&\mathbf{P}\left( 0<Z(m)<T\right) \int_{\varepsilon }^{1/\left( 1+\delta
\right) }e^{-\lambda x}d\mathbf{P}\left( \varepsilon <\frac{Z(m)}{T}\leq
x|0<Z(m)<T\right)   \notag \\
&\sim &\mathbf{P}\left( 0<Z(n)<T\right) \alpha \int_{\varepsilon }^{1/\left(
1+\delta \right) }e^{-\lambda x}x^{\alpha -1}dx.  \label{Term23}
\end{eqnarray*}
Combining the estimates above and letting $\delta $ and $\varepsilon $ to
zero we complete the proof of point 5) of Theorem \ref{T_Population_size}.

\end{document}